\theoremstyle{plain}
\newtheorem{theorem}{Theorem}
\newtheorem{corollary}{Corollary}
\newtheorem{lemma}{Lemma}
\theoremstyle{definition}
\newtheorem{definition}{Definition}
\newtheorem{remark}{Remark}
\newcommand{\SSS}{\mathbb{S}}
\newcommand{\NN}{\mathbb{N}}
\newcommand{\FF}{\mathbb{F}}
\newcommand{\Fq}{\mathbb{F}_q}
\newcommand{\Fqn}{\mathbb{F}_{q^n}}
\newcommand{\Fqd}{\mathbb{F}_{q^d}}
\newcommand{\rank}{\mathrm{rank}}
\begin{document}

\title[Semifields from Skew Polynomial Rings]
{Semifields from\\ 
Skew Polynomial Rings}
\author{Michel Lavrauw
and John Sheekey}
\address{
Michel Lavrauw: Department of Management and Engineering\\
Universit\`a di Padova\\
Italy
\newline John Sheekey: Mathematics Department\\
University College\\
Belfield, Dublin 4\\
Ireland
}
\email{michel.lavrauw@unipd.it, john.sheekey@ucd.ie}
\thanks{The first author acknowledges the support of the Fund for 
Scientific Research - Flanders (FWO). The second author was supported by Claude Shannon Institute, Science Foundation
Ireland Grant 06/MI/006.}
\begin{abstract}
Skew polynomial rings were used to construct finite semifields by Petit in \cite{Petit}, following from a construction of Ore and Jacobson of associative division algebras. Johnson and Jha \cite{JHJO1989} later constructed the so-called {\it cyclic} semifields, obtained using irreducible semilinear transformations. In this work we show that these two constructions in fact lead to isotopic semifields, show how the skew polynomial construction can be used to calculate the nuclei more easily, and provide an upper bound for the number of isotopism classes, improving the bounds obtained by Kantor and Liebler in \cite{KALI2008} and implicitly by Dempwolff in \cite{Dempwolffprep}.

\end{abstract}
\maketitle

\section{INTRODUCTION}
A \emph{semifield} is a division algebra, where multiplication is not necessarily associative. Finite nonassociative semifields of order $q$ are known to exist for each prime power $q=p^n>8$, $p$ prime, with $n>2$. The study of semifields was initiated by Dickson in \cite{Dickson1906} and by now many constructions of semifields are known. We refer to the next section for more details.

In 1933, Ore  \cite{Ore1933} introduced the concept of \emph{skew-polynomial rings} $R = K[t;\sigma]$, where $K$ is a field, $t$ an indeterminate, and $\sigma$ an automorphism of $K$. These rings are associative, non-commutative, and are left- and right-Euclidean. Ore (\cite{Ore1932}, see also Jacobson \cite{Jacobson1934}) noted that multiplication in $R$, modulo right division by an irreducible $f$ contained in the centre of $R$, yields associative algebras without zero divisors. These algebras were called {\it cyclic algebras}. We show that the requirement of obtaining an associative algebra can be dropped, and this construction leads to nonassociative division algebras, i.e. semifields. Subsequent to the writing of this paper, it was brought to the authors' attention that this was noted by Petit \cite{Petit} in 1966 (see also Wene \cite{Wene}).

In 1989, Jha and Johnson \cite{JHJO1989} gave a construction for semifields, 
using irreducible \emph{semilinear transformations}. These semifields were called {\it cyclic semifields}.

In this work we show that the constructions from \cite{JHJO1989} and \cite{Petit} lead to isotopic semifields.
This is Theorem \ref{thm:correspondence2} and Theorem \ref{thm:correspondence3} and can be formulated as follows.

\begin{theorem}
Each cyclic semifield is isotopic to a semifield constructed as a quotient in a skew polynomial ring, and conversely, each semifield constructed as a quotient in a skew polynomial ring is isotopic to a cyclic semifield.
\end{theorem}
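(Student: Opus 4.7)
The plan is to realize both constructions through one evaluation map. Given an irreducible $\sigma$-semilinear operator $T:V\to V$ on a $d$-dimensional $\Fqn$-vector space, define an $\Fq$-algebra homomorphism $\varphi: R=\Fqn[t;\sigma]\to\mathrm{End}_{\Fq}(V)$ by sending $t\mapsto T$ and $\alpha\in\Fqn$ to scalar multiplication by $\alpha$. The commutation $t\alpha=\sigma(\alpha)t$ in $R$ matches the semilinearity $T(\alpha v)=\sigma(\alpha)T(v)$, so $\varphi$ is a well-defined ring homomorphism. Because $R$ is left-Euclidean, the kernel is of the form $Rf$ for a unique monic $f\in R$; irreducibility of $T$ forces $f$ to be irreducible in $R$ and of degree exactly $d$. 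Conversely, starting from an irreducible $f\in R$ of degree $d$, the left module $R/Rf$ is $d$-dimensional over $\Fqn$, and left multiplication by $t$ on this module is an irreducible $\sigma$-semilinear operator.

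For the first implication I would fix a cyclic vector $v_0\in V$ (which exists because $V$ is a simple $R$-module under $\varphi$) and use the evaluation
\[e_{v_0}:R/Rf\to V,\qquad [g]\mapsto g(T)(v_0),\]
which is an $\Fq$-linear bijection. The cyclic semifield product $\star_T$ on $V$ is defined by $g(T)(v_0)\star_T h(T)(v_0):=(g(T)\circ h(T))(v_0)$, while Petit's product $\ast$ on $R/Rf$ is $[g]\ast[h]:=[gh]$. Since $f(T)=0$ as an operator, reduction modulo $f$ does not change the value at $v_0$, and a direct computation yields
\[e_{v_0}([g]\ast[h])=(gh)(T)(v_0)=g(T)\bigl(h(T)(v_0)\bigr)=e_{v_0}([g])\star_T e_{v_0}([h]),\]
so $(e_{v_0},e_{v_0},e_{v_0})$ is an isotopism (in fact an isomorphism) between Petit's semifield and the cyclic one. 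The reverse direction then becomes essentially tautological: given Petit's semifield on $R/Rf$, the operator $T_f$ described above realises it as the cyclic semifield of an irreducible $\sigma$-semilinear transformation, with $v_0=[1]$ playing the role of cyclic vector.

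The main obstacle is keeping track of handedness conventions throughout. One must distinguish the left ideal $Rf$ from the right ideal $fR$, decide on which side $\Fqn$ acts on the quotient, and verify that Petit's recipe of multiplying in $R$ and reducing by right division by $f$ matches the left-ideal picture coming from the annihilator of $T$. A secondary subtlety is confirming that Petit's product is well-defined and free of zero divisors on $R/Rf$ precisely when $f$ is irreducible in $R$, so that the correspondence $T\leftrightarrow f$ identifies the admissible objects on both sides. Once these conventions are fixed consistently, the evaluation-at-cyclic-vector argument gives both directions simultaneously and without further calculation.
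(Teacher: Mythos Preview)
Your cyclic-vector/evaluation approach is the same one the paper uses: in Theorem~\ref{thm:correspondence3} the paper picks a nonzero $v$, forms the cyclic basis $\{v,Tv,\ldots,T^{d-1}v\}$, defines $\phi(t^i)=T^iv$, and reads off $f$ from the relation $T^dv=\sum f_iT^iv$; your $e_{v_0}$ is exactly this $\phi$, and the converse direction (Theorems~\ref{thm:correspondence} and~\ref{thm:correspondence2}) is your observation with $v_0=[1]$.

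There is, however, a genuine error in how you produce $f$, not merely a convention to be sorted out. Since $\varphi:R\to\mathrm{End}_{\Fq}(V)$ is a \emph{ring} homomorphism, $\ker\varphi$ is a two-sided ideal of $R=\Fqn[t;\sigma]$, and nonzero two-sided ideals of $R$ are generated by central elements. Concretely $\ker\varphi=R\cdot\hat f(t^n)$ for an irreducible $\hat f\in\Fq[y]$ of degree $d$ (this is exactly the $mzlm$ appearing in Lemma~\ref{thm:fullmatrix}); its generator has degree $nd$ in $t$ and is \emph{reducible} in $R$ whenever $n>1$. So ``the kernel is $Rf$ with $f$ irreducible of degree $d$'' is false. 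What you actually need is the annihilator of the chosen cyclic vector: $\mathrm{Ann}_R(v_0)=\{g\in R:g(T)v_0=0\}=Rf$ with $f$ monic irreducible of degree $d$, because $V\cong R/\mathrm{Ann}_R(v_0)$ is simple of $\Fqn$-dimension $d$. With this fix your key identity $e_{v_0}([g]\ast[h])=(gh)(T)(v_0)$ is valid, since $f(T)v_0=0$ even though $f(T)\neq 0$ as an operator on $V$. You should also make explicit that the Jha--Johnson product $\star_T$ depends on a choice of $K$-basis, and that your formula $g(T)(v_0)\star_T h(T)(v_0)=g(T)\bigl(h(T)(v_0)\bigr)$ holds precisely when that basis is the cyclic one $\{v_0,Tv_0,\ldots,T^{d-1}v_0\}$; for an arbitrary basis one obtains only an isotopism, which is still enough for the theorem.
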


We also investigate the number of isotopism classes of semifields of order $q^{nd}$, obtained from an irreducible $f$ of degree $d$ in the skew polynomial ring $R=\Fqn[t;\sigma]$, where $Fix(\sigma) = \Fq$. We denote this number by $A(q,n,d)$.

In \cite{KALI2008} Kantor and Liebler provided an upper bound for the number of isotopism classes of semifields arising from semilinear transformations. This bound has recently been improved (implicitly) by Dempwolff in \cite{Dempwolffprep}. We further improve on this bound by proving an upper bound for $A(q,n,d)$.

We conclude the introduction with the statement of this bound.
Let
\[
I(q,d) := \{ f \in \Fq[y] ~|~ \textrm{$f$ monic, irreducible, degree $d$}\},
\]
and let $G$ be the semidirect product of $\Fq^\times$ and $Aut(\Fq)$, and define the action of $G$ on $I(q,d)$ in the following way
\[
f(y)^{(\lambda,\rho)}:= \lambda^{-d}f^{\rho}(\lambda y)
\]
where $\lambda \in \Fq^{\times}$, $\rho \in Aut(\Fq)$. If $q = p^h$ for $p$ prime, $G$ has order $h(q-1)$.
We will prove the following theorem.

\begin{theorem} The number of isotopism classes of semifields of order $q^{nd}$ obtained from $\Fqn[t;\sigma]$ is less or equal to the number of $G$-orbits on $I(q,d)$.
\end{theorem}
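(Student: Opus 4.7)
The plan is to construct a map $\Phi$ from isotopism classes of semifields of the form $R/(f)$, with $R := \Fqn[t;\sigma]$ and $f$ irreducible of degree $d$, into the set of $G$-orbits on $I(q,d)$; the bound $A(q,n,d)\leq |I(q,d)/G|$ then follows at once from the existence of such a map.

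First I would attach a canonical $F \in I(q,d)$ to each such $f$. Since the centre of $R$ equals $Z(R)=\Fq[t^n]$, the two-sided bound of any $f$ lies in $Z(R)$ and thus has the shape $F(t^n)$ for a unique monic $F \in \Fq[y]$; the irreducibility of $f$ in $R$ forces $F$ to be irreducible in $\Fq[y]$, and comparing degrees gives $\deg F = d$. This defines a map $f \mapsto F$ into $I(q,d)$.

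Next I would exhibit three elementary operations on $f$ which do not change the isotopism class of $R/(f)$ but realise the $G$-action on $F$: left-multiplication of $f$ by a unit of $\Fqn^\times$ (which preserves the right ideal $(f)$ entirely), coefficient-wise application of $\rho \in Aut(\Fqn)$ (inducing the $Aut(\Fq)$-factor of $G$ on $F$), and the substitution $t \mapsto \mu t$ for $\mu \in \Fqn^\times$ (a ring automorphism of $R$ which multiplies $t^n$ by the norm $N_{\Fqn/\Fq}(\mu) \in \Fq^\times$, and whose effect on $F$ is the $\Fq^\times$-factor of $G$ after re-normalising to a monic polynomial). A short computation should confirm that these moves produce precisely $F(y)^{(\lambda,\rho)} = \lambda^{-d} F^\rho(\lambda y)$, so $\Phi$ descends to a map into $I(q,d)/G$.

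The principal obstacle, and the real content of the theorem, is showing that an arbitrary isotopism between two such semifields $R/(f_1)$ and $R/(f_2)$ already forces $F_1$ and $F_2$ to lie in the same $G$-orbit, i.e.\ that no further equivalences on $F$ arise from isotopism. I would reduce to the cyclic-semifield description established earlier in the paper, in which $F$ acquires a second interpretation as the characteristic polynomial over $\Fq$ of the $\Fq$-linear operator $T^n$, where $T$ is the irreducible semilinear transformation attached to the semifield. Combining the explicit nuclei computations furnished by the skew-polynomial model with the known conjugacy criterion for cyclic semifields, an isotopism should decompose into a change of basis inside the right nucleus, a semilinear scaling, and a Galois conjugation, each of which is already captured by an element of $G$ acting on $F$. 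The main technical difficulty to manage is that isotopism, being strictly more permissive than isomorphism, requires tracking the effect on each of the three nuclei separately; the nucleus formulas obtained earlier in the paper are what make this tracking feasible.
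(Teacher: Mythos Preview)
You have inverted the logical direction of the argument. To prove an \emph{upper} bound on the number of isotopism classes, you need a surjection from $G$-orbits onto isotopism classes, equivalently the implication
\[
\hat f \text{ and } \hat g \text{ lie in the same $G$-orbit} \;\Longrightarrow\; \SSS_f \text{ and } \SSS_g \text{ are isotopic}.
\]
Your opening sentence, by contrast, proposes to build a map $\Phi$ \emph{from} isotopism classes \emph{to} $G$-orbits, and asserts the bound follows from its mere existence. It does not: a map $A\to B$ gives $|A|\le|B|$ only if it is injective. What you call ``the principal obstacle'' in your third paragraph---that every isotopism forces $F_1,F_2$ into the same $G$-orbit---is the well-definedness of $\Phi$, and this is the \emph{converse} implication. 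It is not needed for the theorem, is not proved in the paper, and would in fact yield the stronger statement $A(q,n,d)=M(q,d)$, which the paper establishes only in small cases by computer.

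The paper's proof is far shorter than your outline and uses only the forward implication. It rests on two ingredients. First, if $\hat f=\hat g$ (same minimal central left multiple) then $f$ and $g$ are \emph{similar}, and similarity already gives an explicit isotopism $\SSS_f\cong\SSS_g$; this step is entirely absent from your proposal (scalar multiplication of $f$ by units of $\Fqn^\times$ is not similarity---it does not move $f$ at all as a monic polynomial). Second, ring automorphisms $a(t)\mapsto a^\rho(\alpha t)$ of $R$ carry $f$ to an isomorphic semifield while transforming $\hat f$ by exactly $(\lambda,\rho)\in G$ with $\lambda=N_{\Fqn/\Fq}(\alpha)$. Combining these: given $\hat g=\hat f^{(\lambda,\rho)}$, apply the ring automorphism to $f$ to obtain $h$ with $\hat h=\hat g$, then use similarity to pass from $h$ to $g$. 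Your second paragraph contains the germ of the second ingredient, but you frame it as showing ``$\Phi$ descends to $I(q,d)/G$'', which is a non-sequitur; what it actually shows is the forward implication, and once you add the similarity step you are done---no analysis of arbitrary isotopisms, no nucleus-tracking, no third paragraph.
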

We denote this number of orbits by $M(q,d)$. 
This number lies in the interval
\[
\frac{q^d - \theta}{hd(q-1)}\leq M(q,d) \leq \frac{q^d - \theta}{d},
\]
where $\theta$ denotes the number of elements of $\Fqd$ contained in a subfield $\FF_{q^e}$ for $e|d$, and $q = p^h$, where $p$ is prime.

\section{Finite semifields}
In this section we collect the terminology of the theory of finite semifields, used in the remainder of the paper. For more details on the subject we refer to \cite{Knuth1965}, \cite{Kantor2006} and \cite{LaPo2011}.
A {\it finite semifield}\index{semifield}\index{finite semifield} $\mathbb S$ is a finite algebra with at least two elements,
and two binary operations $+$ and $\circ$, satisfying the following axioms. 
\begin{itemize}
\item[(S1)] $({\mathbb{S}},+)$ is a group with neutral $0$.
\item[(S2)] $x\circ(y+z) =x\circ y + x\circ z$ and $(x+y)\circ z = x\circ z + y
\circ z$, for all $x,y,z \in {\mathbb{S}}$.
\item[(S3)] $x\circ y =0$ implies $x=0$ or $y=0$.
\item[(S4)] $\exists 1 \in {\mathbb{S}}$ such that $1\circ x = x \circ 1 = x$,
for all $x \in {\mathbb{S}}$.
\end{itemize}

One easily shows that the additive group of a finite semifield is elementary abelian,
and the exponent of the additive group of $\mathbb S$ is called the {\it
characteristic} of $\mathbb S$. Contained in a finite semifield are the following
important substructures, all of which are isomorphic to a finite field. The {\it
left nucleus}\index{left nucleus} ${\mathbb{N}}_l({\mathbb{S}})$, 
{\it the middle nucleus}\index{middle nucleus}
${\mathbb{N}}_m({\mathbb{S}})$, and the {\it right nucleus}\index{right nucleus}
${\mathbb{N}}_r({\mathbb{S}})$ are defined as follows: 
\begin{equation}
{\mathbb{N}}_l({\mathbb{S}}):=\{x~:~ x \in {\mathbb{S}} ~|~ x \circ (y\circ
z)=(x\circ y)\circ z, ~\forall y,z \in {\mathbb{S}}\}, 
\end{equation}
\begin{equation}
{\mathbb{N}}_m({\mathbb{S}}):=\{y~:~ y \in {\mathbb{S}} ~|~ x \circ (y\circ
z)=(x\circ y)\circ z, ~\forall x,z \in {\mathbb{S}}\}, 
\end{equation}
\begin{equation}
{\mathbb{N}}_r({\mathbb{S}}):=\{z~:~ z \in {\mathbb{S}} ~|~ x \circ
(y\circ z)=(x\circ y)\circ z, ~\forall x,y \in {\mathbb{S}}\}.
\end{equation}
The intersection $\NN(\SSS)$ of the nuclei is called the \emph{associative centre}\index{associative center}, and the elements of $\NN(\SSS)$ which commute with all other elements of $\SSS$ form the \emph{centre}\index{center} $Z(\SSS)$. If there is no confusion, we denote these subfields by $\NN_l$, $\NN_m$, $\NN_r$, $Z$.

Two semifields $\mathbb S$ and $\hat{\mathbb{S}}$ are called {\it isotopic} \index{isotopic} if
there exists a triple  $(F,G,H)$ of non-singular linear transformations from
$\mathbb S$ to $\hat{\mathbb{S}}$  such that $x^F\hat\circ y^G = (x\circ y)^H$,
for all $x,y,z \in {\mathbb{S}}$. The triple $(F,G,H)$ is called an {\it
isotopism}\index{isotopism}.

\section{Semifields from skew-polynomial rings}

In this section we use an \emph{irreducible} polynomial in a skew polynomial ring to construct a semifield.
We start with some definitions and properties of skew polynomial rings. For a more detailed description we refer to Ore \cite{Ore1933}.

\begin{definition} Let $K$ be a field, and $\sigma$ an automorphism of $K$. Define the \emph{skew polynomial ring} $R = K[t;\sigma]$ to be the set of polynomials in $t$ with coefficients in $K$, where addition is defined termwise, and multiplication is defined by $ta = a^{\sigma}t$ for all $a \in K$.
\end{definition}

We say that an element $f$ is \emph{irreducible} in $R$ if there do not exist any $a,b \in R$ with $\deg(a),\deg(b) < \deg(f)$ such that $f = ab$.

\begin{theorem}[Ore \cite{Ore1933}]
\label{Oreprop} Let $R$ be a skew-polynomial ring. Then
\begin{enumerate}
	\item
		multiplication in $R$ is associative and $R$ satisfies both distributive laws;
	\item
		multiplication in $R$ is not commutative unless $\sigma$ is the identity automorphism;
	\item
		$R$ is left- and right-Euclidean;
	\item
		$R$ is a left- and right-principal ideal domain;
	\item
		the centre of $R= K[t;\sigma]$ is $F[t^n;\sigma] \simeq F[y]$, where $F$ is the fixed field of $\sigma$ and the isomorphism maps $t^n$ to $y$;
	\item
		if $f_1,f_2, \ldots ,f_r,g_1,g_2,\ldots,g_s$ are irreducible elements of $R$, and 
		\[
		f_1 f_2 \ldots f_r = g_1 g_2 \ldots g_s
		\]
		then $r=s$ and there is a permutation $\pi \in S_r$ such that $\deg(f_i) = \deg(g_{\pi(i)})$ for all $i$.
\end{enumerate}
\end{theorem}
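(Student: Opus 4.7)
The plan is to dispatch parts (1)--(4) by direct computation, then do (5) by a short coefficient analysis, and finally tackle (6), which is the only substantial part, via composition series and Jordan--Hölder. For parts (1) and (2): I would first establish by induction that $t^i a = a^{\sigma^i} t^i$, giving the monomial product formula $(at^i)(bt^j) = a b^{\sigma^i} t^{i+j}$; associativity then reduces to $\sigma^i \sigma^j = \sigma^{i+j}$, distributivity follows from the termwise definition of addition, and non-commutativity holds whenever $\sigma \neq \mathrm{id}$ because any $a \in K$ with $a^\sigma \neq a$ satisfies $ta = a^\sigma t \neq at$.

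For part (3), I would mimic polynomial division: if $f, g \in R$ have leading terms $a t^m$ and $b t^d$ with $m \geq d$, then the monomial $\bigl(a (b^{\sigma^{m-d}})^{-1}\bigr) t^{m-d}$ multiplied on the right by $g$ has the same leading term as $f$, so iteration produces $f = q g + r$ with $\deg r < d$. The invertibility of $b^{\sigma^{m-d}}$ uses crucially that $\sigma$ is an automorphism of the field $K$. Left Euclidean division is symmetric. Part (4) then follows by the standard argument: any nonzero one-sided ideal is generated by an element of minimum degree.

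For (5), write a candidate central element as $z = \sum_i a_i t^i$. Comparing coefficients in $zt = tz$ forces $a_i = a_i^\sigma$, hence $a_i \in F$; comparing coefficients in $za = az$ for arbitrary $a \in K$ gives $a_i (a^{\sigma^i} - a) = 0$, so any nonzero $a_i$ must occur at an index with $\sigma^i = \mathrm{id}$, i.e.\ $n \mid i$. Conversely, every monomial $c t^{ni}$ with $c \in F$ commutes with both $K$ and $t$, hence lies in the centre. The map sending $t^n$ to $y$ is then a ring isomorphism $F[t^n;\sigma] \to F[y]$ because $\sigma$ acts trivially on the coefficients in $F$.

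The main obstacle is (6), for which the cleanest approach is module-theoretic. Set $f = f_1 f_2 \cdots f_r$ and consider the descending chain of left ideals
\[
R \supset R f_1 \supset R f_1 f_2 \supset \cdots \supset R f_1 f_2 \cdots f_r = R f.
\]
Passing to the quotient $M = R/R f$, this chain becomes a composition series: each successive factor $R f_1 \cdots f_{i-1}/R f_1 \cdots f_i$ is isomorphic as a left $R$-module to $R/R f_i$, which is simple because $f_i$ is irreducible and $R$ is a left PID by (4), and it has $K$-dimension $\deg(f_i)$ by Euclidean division. The $g_j$-factorization yields a second composition series of the same finite-length module $M$, so the Jordan--Hölder theorem produces $r = s$ together with a permutation $\pi$ such that $R/R f_i \cong R/R g_{\pi(i)}$ as left $R$-modules. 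Taking $K$-dimensions yields $\deg(f_i) = \deg(g_{\pi(i)})$, as required.
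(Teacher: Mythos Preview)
The paper does not give its own proof of this theorem; it is stated with attribution to Ore \cite{Ore1933} and used as a black box. So there is nothing to compare against, and your sketch is more than the paper provides. Parts (1)--(5) are handled correctly and by the standard arguments.

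There is, however, a genuine slip in part (6). Your chain
\[
R \supset R f_1 \supset R f_1 f_2 \supset \cdots \supset R f_1 f_2 \cdots f_r
\]
need not be descending: for left ideals one has $Rab \subset Rb$, not $Rab \subset Ra$, since $rab = (ra)b \in Rb$ while $rab$ is generally not a left multiple of $a$. (Concretely, take $f_1 = t-a$ with $a^{\sigma}\neq a$ and $f_2 = t$; then $f_1 f_2 = t^2 - at \notin R(t-a)$.) The fix is to reverse the order: the chain
\[
R \supset R f_r \supset R f_{r-1} f_r \supset \cdots \supset R f_1 f_2 \cdots f_r = R f
\]
is descending, and right multiplication by $f_{i+1}\cdots f_r$ gives a left $R$-module isomorphism $R/Rf_i \cong R f_{i+1}\cdots f_r / R f_i f_{i+1}\cdots f_r$ (using that $R$ is a domain for right cancellation). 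From there your Jordan--H\"older argument goes through unchanged: each $R/Rf_i$ is simple because $Rf_i$ is a maximal left ideal in the left PID $R$, and its $K$-dimension is $\deg(f_i)$ by right Euclidean division. Alternatively, you could keep your original indexing but work with \emph{right} ideals $f_1\cdots f_i R$ and right $R$-modules, where $f_1\cdots f_i R \subset f_1\cdots f_{i-1} R$ does hold.
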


For this paper we will set $K = \Fqn$, and let $F = \Fq$ be the fixed field of $\sigma$. The properties of skew polynomial rings allow us to define a semifield in the following way. The result is not new, see Remark \ref{discuss} below. We include a proof for the sake of completeness.

\begin{theorem} Let $V$ be the vector space consisting of elements of $R$ of degree strictly less than $d$. Let $f \in R$ be irreducible of degree $d$. Define a multiplication $\circ_f$ on $V$ by
\[
a \circ_f b := ab \mod_r f
\]
where juxtaposition denotes multiplication in $R$, and '$\mod_r$' denotes remainder on right division by $f$.
Then $\SSS_f = (V,\circ_f)$ is semifield of order $q^{nd}$.
\end{theorem}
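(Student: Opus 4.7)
The plan is to verify axioms (S1)--(S4) of a semifield in turn, with the cardinality $q^{nd}$ immediate from the structure of $V$. The first three axioms are essentially formalities; the real content lies in (S3), which is where irreducibility of $f$ enters.

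By construction $V$ is the $\Fqn$-vector space with basis $\{1,t,\ldots,t^{d-1}\}$, giving (S1) and $|V| = (q^n)^d = q^{nd}$. Axiom (S4) holds because the constant $1 \in V$ satisfies $1\cdot a = a\cdot 1 = a$ in $R$, and $\deg a < d$ forces this product to equal its own right-remainder modulo $f$. For (S2), I would combine the distributivity in $R$ (part (1) of Theorem~\ref{Oreprop}) with the fact that $u \mapsto u \mod_r f$ is additive; this follows from the uniqueness of the Euclidean remainder (part (3)): if $u = q_1 f + r_1$ and $u' = q_2 f + r_2$ with $\deg r_i < d$, then $u + u' = (q_1+q_2)f + (r_1+r_2)$ is the unique such decomposition, so the remainder of the sum is $r_1 + r_2$.

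The main obstacle is (S3). Suppose $a \circ_f b = 0$ with $a,b \in V \setminus \{0\}$, so $ab = gf$ in $R$ for some $g$. Since $R$ is a principal ideal domain by part (4), it has no zero divisors, so $ab \neq 0$ and hence $g \neq 0$; then $\deg(gf) = \deg g + d \geq d$. If $\deg a = 0$ or $\deg b = 0$, then $\deg(ab) < d \leq \deg(gf)$, contradicting $ab = gf$; so $\deg a, \deg b \geq 1$, and therefore $\deg g = \deg a + \deg b - d$ lies in $\{0,1,\ldots,d-2\}$.

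If $\deg g = 0$, then $g$ is a unit and $f = (g^{-1}a)\,b$ is a factorization of $f$ into two polynomials of degree less than $d$, directly contradicting the definition of irreducibility. Otherwise $\deg g \geq 1$, so $a$, $b$, $g$ are all non-units; I would then factor each into irreducibles using the Euclidean property and induction on degree, writing $a = a_1 \cdots a_r$, $b = b_1 \cdots b_s$, $g = g_1 \cdots g_t$. Substituting into $ab = gf$ yields
\[
a_1 \cdots a_r\, b_1 \cdots b_s \;=\; g_1 \cdots g_t\, f,
\]
two factorizations of the same element into irreducibles. Part (6) of Theorem~\ref{Oreprop} forces the multisets of degrees on the two sides to agree; but $\deg f = d$, while every factor on the left has degree at most $\max(\deg a, \deg b) < d$, a contradiction. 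This invocation of the Jordan--H\"older-type uniqueness is the key conceptual step and the only place where part (6) is genuinely needed.
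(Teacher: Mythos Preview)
Your proof is correct and follows essentially the same approach as the paper: both arguments reduce axiom (S3) to part (6) of Theorem~\ref{Oreprop} by writing $ab = gf$, factoring into irreducibles, and observing that $\deg f = d$ cannot be matched by any irreducible factor of $a$ or $b$. Your version is more explicit about the edge cases (units and degree-zero factors) than the paper's one-line invocation of part (6), but the underlying idea is identical.
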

\begin{proof}
This multiplication is well defined, as $R$ is right-Euclidean. We check that $\SSS_f$ has no zero divisors. Suppose
$a,b\in \SSS_f$, and $a \circ_f b=0$. This implies $\exists h\in \SSS_f$ such that $ab=hf$. Comparing degrees, part (6) of the previous theorem gives a contradiction unless $a$ or $b$ is the zero polynomial. The other properties of a semifield are easily verified. Obviously $\SSS_f$ has order $q^{nd}$.
\end{proof}

\begin{remark}
Note that for any $0 \ne \alpha \in K$, the polynomials $f$ and $\alpha f$ define the same semifield.

Note that defining the multiplication using remainder on \emph{left} division by $f$ also defines a semifield. However, in Corollary \ref{thm:leftdivision} we will show that the semifields obtained are anti-isomorphic.
\end{remark}

\begin{remark}
For the rest of this paper we will write {\it mod} for {\it mod$_r$} unless otherwise stated, and write \emph{divides} for \emph{right divides}.
\end{remark}

In  \cite{Ore1932} Ore introduced the following notion of {\it eigenring} (called the {\it normalizer} by Jacobson in \cite{Jacobson1934}).

\begin{definition} Let $f$ be a monic irreducible element of $R$ of degree $d$. Define the \emph{eigenring} of $f$ by
\[
E(f) = \{ u \in R ~|~ \deg(u) < d,~f~{\textrm{divides}}~fu \}
\]
\end{definition}

\begin{remark}
\label{discuss}
Ore  and Jacobson, when studying \emph{cyclic algebras}, each considered structures obtained from the vector space of residue classes of $R = K[t;\sigma]$ modulo a left ideal $Rf$. As they were interested only in associative algebras, they restricted their attention to the eigenring $E(f)$. They each proved (in different ways) the following theorem (\cite{Ore1932}, p. 242 and \cite{Jacobson1934}, p. 201-202):
\medskip
\begin{center}
{\it
If $f$ is irreducible in $R$, then $E(f)$ is a[n associative] division algebra.
}
\end{center}

As we have seen above, if we choose a specific representative of each residue class (the unique element of degree less than $\deg(f)$), then the structure $\SSS_f$ obtained is a non-associative algebra. The theorem then trivially extends to:
\begin{center}
{\it
If $f$ is irreducible in $R$, then $\SSS_f$ is a division algebra.
}
\end{center}
The proof relies only on the theorem of Ore (Theorem \ref{Oreprop} above). Hence it is perhaps fair to say that the construction of the semifields $\SSS_f$
was, in essence, known to Ore and Jacobson.

This construction was then explicitly formulated by Petit \cite{Petit} in 1966. As this construction is perhaps not well known, and as some of the tools used are required for later results, we include proofs of some of the results contained therein.
\end{remark}
\section{Nuclei}
\label{sect:nuclei}
We now investigate the nuclei of the above defined semifields. These results can be found in \cite{Petit}. 
 \begin{theorem}(\cite{Petit},2)
\label{thm:rnucleus}
Let $f$ be a \emph{monic} irreducible element of $R$ of degree $d$, and let $\SSS_f$ be the semifield as defined above. Then
\[
N_r(\SSS_f) = E(f)
\]
and
\[
E(f) = \SSS_f \Leftrightarrow f \in Z(R)
\]
where $Z(R)$ denotes the centre of $R$.
\end{theorem}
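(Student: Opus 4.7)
The strategy is to compute the associativity defect in $\SSS_f$ directly. For $x,y,z \in V$, write the right-divisions $yz = q_1 f + (yz \bmod f)$ and $xy = q_2 f + (xy \bmod f)$. Since $xq_1 f$ is a right multiple of $f$, one obtains
\[
x \circ_f (y \circ_f z) = xyz \bmod f, \qquad (x \circ_f y) \circ_f z = xyz \bmod f - (q_2 f z) \bmod f.
\]
Thus $(x \circ_f y) \circ_f z = x \circ_f (y \circ_f z)$ holds if and only if $(q_2 f z) \bmod f = 0$, where $q_2$ is the quotient of $xy$ on right-division by $f$.

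For $N_r(\SSS_f) = E(f)$: if $z \in E(f)$, then $fz = sf$ for some $s \in R$, so $q_2 f z = q_2 s f$ is always a right multiple of $f$, giving $z \in N_r(\SSS_f)$. Conversely, the choice $x = t$ and $y = t^{d-1}$ lies in $V$ and gives $xy = t^d$, whose right-division by the monic polynomial $f$ of degree $d$ produces quotient exactly $1$. Hence any $z \in N_r(\SSS_f)$ satisfies $fz \bmod f = 0$, i.e., $z \in E(f)$.

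For $E(f) = \SSS_f \iff f \in Z(R)$: if $f \in Z(R)$, then $fu = uf$ for all $u$, so trivially $fu \bmod f = 0$ and $E(f) = V$. Conversely, assume $E(f) = V$ and write $f = \sum_{i=0}^d a_i t^i$ with $a_d = 1$. Testing with $u = a \in K$: the equation $fa = h_a f$ has $h_a \in K$ for degree reasons, and expanding via $t^i a = a^{\sigma^i} t^i$ yields $h_a = a^{\sigma^d}$ together with $a_i(a^{\sigma^i} - a^{\sigma^d}) = 0$ for every $a \in K$; hence $a_i \neq 0$ implies $\sigma^i = \sigma^d$. Testing with $u = t$: writing $ft = h f$ forces $h = t + h_0$, and matching the constant term gives $h_0 a_0 = 0$, while matching $t^j$ for $1 \leq j \leq d$ gives $a_{j-1} - a_{j-1}^\sigma = h_0 a_j$. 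The crucial point is that $a_0 \neq 0$, since $a_0 = 0$ would give the factorization $f = \bigl(\sum_{j=0}^{d-1} a_{j+1} t^j\bigr)\, t$, contradicting irreducibility. Thus $h_0 = 0$ and all $a_i$ lie in $F = \mathrm{Fix}(\sigma)$. Combined with the first test, $a_0 \neq 0$ forces $\sigma^d = \mathrm{id}$, hence $n \mid d$, and $a_i = 0$ whenever $n \nmid i$. Therefore $f \in F[t^n] = Z(R)$ by Theorem \ref{Oreprop}(5).

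The main obstacle is extracting $a_0 \neq 0$ from irreducibility, which hinges on the right-factorization $f = g \cdot t$ being possible with $\deg g < d$; everything else is careful bookkeeping with the skew-multiplication rule $ta = a^\sigma t$.
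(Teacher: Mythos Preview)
Your proof is correct and follows essentially the same approach as the paper: compute the associator in terms of the quotient on right division by $f$, then test the hypothesis $E(f)=V$ against $u=t$ and $u=a\in K$ to force $f\in F[t^n]$. The differences are purely cosmetic---you reverse the order of the two tests and give the explicit witness $x=t$, $y=t^{d-1}$ for the quotient $q_2=1$, where the paper simply asserts that one can choose $a,b$ with $u=1$.
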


\begin{proof}
First we will prove the second assertion. Suppose $E(f) = \SSS_f$. Let
\[
f = \sum_{i=0}^d f_i t^i
\]
where $f_i \in K$, and $f_d = 1$ as $f$ is monic. As $t\in E(f)$ by assumption, we must have $ft \equiv 0 \mod f$. But then
\begin{align*}
ft \mod f &= ft - tf\\
					&= \sum_{i=0}^d (f_i - f_i^{\sigma})t^i\\
					&= 0,
\end{align*}
implying that $f_i = f_i^{\sigma}$ for all $i$, and so $f_i \in F$ for all $i$. Now as $\alpha \in E(f)$ for all $\alpha \in K$, we have
\begin{align*}
f \alpha \mod f &= f\alpha - \alpha^{\sigma^d}f\\
					&= \sum_{i=0}^d (\alpha^{\sigma^i} - \alpha^{\sigma^d})f_i t^i\\
					&= 0,
\end{align*}
implying that for each $i$ we have $f_i=0$ or $\alpha^{\sigma^i}= \alpha^{\sigma^d}$ for all $a\in K$. As $f$ is irreducible, we must have $f_0 \ne 0$ (for otherwise $t$ would divide $f$). Hence if $f_i \ne 0$, we have $\alpha^{\sigma^i}= \alpha$ for all $\alpha \in K$, and so $\sigma^i = \mathrm{id}$. Hence if $f_i \ne 0$ then $n$ divides $i$. Therefore $f \in F[t^n;\sigma] = Z(R)$, as claimed.

Conversely, if $f \in Z(R)$ then clearly $fu = uf$  is divisible by $f$ for all $u$, and so $E(f) = \SSS_f$.

We now show that $\NN_r(\SSS_f) = E(f)$. For any $a,b,c \in R$ of degree less than $d = \deg(f)$ we can find unique $u,v,w,z \in R$ of degree less than $d$ such that
\[
ab = uf+v,~\mbox{and}
\]
\[
bc = wf +z,
\]
i.e. $a\circ_f b = v$, $b \circ_f c = z$. Then
\[
(a \circ_f b) \circ_f c = v \circ_f c = vc \mod f,
\]
while
\[
a \circ_f (b \circ_f c) = a \circ_f z = az \mod f.
\]
But as $R$ is associative, we have that
\[
ufc+vc = (ab)c = a(bc) = awf+az,
\]
and hence 
\[
az = ufc + vc \mod f.
\]
Therefore
\[
(a \circ_f b) \circ_f c = a \circ_f (b \circ_f c) \Leftrightarrow ufc = 0 \mod f.
\]
Let $c$ be in the right nucleus. One can choose $a,b$ such that $u= 1$. Then $fc = 0 \mod f$, implying that $c \in E(f)$. Conversely, if $c \in E(f)$ then $ufc = 0 \mod f$ for all $u$, and hence $c$ is in the right nucleus, as claimed.
\end{proof}
Hence we get the following corollary:
\medskip
\begin{corollary}(Petit, \cite{Petit}, 9)
$\SSS_f$ is associative if and only if $f \in Z(R)$.
\end{corollary}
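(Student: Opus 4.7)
The corollary is an immediate consequence of Theorem \ref{thm:rnucleus}, and my plan is simply to chain together the two assertions of that theorem.

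First, I would observe that by definition of associativity, $\SSS_f$ is associative if and only if every element of $\SSS_f$ lies in the right nucleus, i.e. $\NN_r(\SSS_f) = \SSS_f$. (In fact, associativity is equivalent to any one of the three nuclei being all of $\SSS_f$, since the identity $x \circ (y \circ z) = (x\circ y) \circ z$ for all $x,y,z$ is the same condition in each case.)

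Next, I would invoke the first assertion of Theorem \ref{thm:rnucleus}, namely $\NN_r(\SSS_f) = E(f)$, to rewrite the condition $\NN_r(\SSS_f) = \SSS_f$ as $E(f) = \SSS_f$. Finally, the second assertion of Theorem \ref{thm:rnucleus} gives $E(f) = \SSS_f \Leftrightarrow f \in Z(R)$, which completes the chain of equivalences.

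There is no real obstacle here: the entire content of the corollary is packaged inside Theorem \ref{thm:rnucleus}, and the proof amounts to a two-line chain of equivalences. The only thing to be careful about is justifying the first step (that associativity of $\SSS_f$ is equivalent to $\NN_r(\SSS_f) = \SSS_f$), which is immediate from the definition of $\NN_r$ given in the preliminaries.
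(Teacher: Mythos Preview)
Your proposal is correct and matches the paper's approach: the paper simply states ``Hence we get the following corollary'' after Theorem \ref{thm:rnucleus}, leaving the chain of equivalences implicit, and you have spelled out exactly that chain. There is nothing to add.
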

We will see in Lemma \ref{thm:fullmatrix} that if $K$ is a finite field, and $\sigma$ is not the identity automorphism, then every element of $Z(R)$ is reducible. This is also implied by the Wedderburn-Dickson theorem, for otherwise we would obtain a non-commutative finite division algebra. Note however that such elements can exist over infinite fields.

\begin{theorem}(\cite{Petit},2)
\label{thm:lmnuclei}
Suppose $f$ is a monic irreducible element of $R= K[t;\sigma]$ such that $f \notin Z(R)$. The left and middle nuclei of $\SSS_f$ are given by
\[
\NN_l(\SSS_f) = \NN_m(\SSS_f) = (K).1
\]
i.e. they are the set of constant polynomials, and the centre is 
\[
Z(\SSS_f) = (F).1.
\]
\end{theorem}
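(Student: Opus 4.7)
The plan is to exploit the associator identity already derived in the proof of Theorem \ref{thm:rnucleus}: if $a, b, c$ have degree less than $d$ and $ab = uf + v$ is the right-division of $ab$ by $f$, then
\[
(a \circ_f b) \circ_f c \;-\; a \circ_f (b \circ_f c) \;=\; -ufc \mod f.
\]
Consequently, $b \in \NN_m(\SSS_f)$ iff $ufc \equiv 0 \pmod f$ for every $a, c \in \SSS_f$, and $a \in \NN_l(\SSS_f)$ iff the same holds for every $b, c$. The easy containment $K \cdot 1 \subseteq \NN_l \cap \NN_m$ is immediate: if either $a$ or $b$ is a constant in $K$, then $ab$ still has degree less than $d$, forcing $u = 0$ and hence the associator to vanish.

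For the reverse containment, I would suppose that $b$ has degree $k \geq 1$ with leading coefficient $b_k \ne 0$ and choose the test element $a = t^{d-k}$. The commutation rule $t\alpha = \alpha^\sigma t$ gives $t^{d-k} b$ of degree exactly $d$ with leading coefficient $b_k^{\sigma^{d-k}}$, and since $f$ is monic the quotient on right-division is the scalar $u = b_k^{\sigma^{d-k}} \in K^\times$. When $u$ is a nonzero scalar, associativity in $R$ gives $ufc = u \cdot (fc)$, and writing $fc = qf + r$ with $\deg r < d$ one finds $ufc \mod f = ur$, which vanishes iff $r = 0$, iff $c \in E(f)$. Hence $b \in \NN_m$ would force $E(f) = \SSS_f$, and by Theorem \ref{thm:rnucleus} this forces $f \in Z(R)$---contradicting the hypothesis. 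The argument for $\NN_l$ is symmetric: if $a$ has degree $k \geq 1$, take $b = t^{d-k}$ to obtain $u = a_k \in K^\times$ and derive the same contradiction.

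For the centre, one has automatically $Z(\SSS_f) \subseteq \NN_l \cap \NN_m = K \cdot 1$. To identify the central scalars, observe that $\alpha y$ and $y \alpha$ both have degree less than $d$ for $\alpha \in K$, so no reduction mod $f$ is needed and a direct computation in $R$ yields
\[
\alpha y - y \alpha \;=\; \sum_{i=0}^{d-1}(\alpha - \alpha^{\sigma^i})\, y_i t^i.
\]
This vanishes for all $y$ iff $\alpha = \alpha^{\sigma^i}$ for $i = 1, \ldots, d-1$; taking $i = 1$ yields $\alpha \in F$, and conversely any $\alpha \in F$ is central in $R$, so it commutes with every $y$ under $\circ_f$ and satisfies $f\alpha = \alpha f \equiv 0 \pmod f$, placing it in $\NN_r$ as well. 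The main point requiring care is the identity $ufc \mod f = u \cdot (fc \mod f)$ for scalar $u$ together with the construction of a test element (namely $t^{d-k}$) that forces the quotient to be a nonzero element of $K$; everything else is routine manipulation with right-division and the commutation rule.
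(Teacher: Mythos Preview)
Your proof is correct and follows essentially the same approach as the paper's: both use the associator identity from Theorem~\ref{thm:rnucleus}, note that a constant $a$ or $b$ forces $u=0$, and for an element of positive degree pick a monomial partner so that $u$ is a nonzero scalar, whence any $c \notin E(f)$ (which exists since $f \notin Z(R)$) breaks associativity. The only cosmetic differences are that the paper scales its test element so that $u=1$ exactly, and for the centre it checks commutation with $t$ alone rather than with a general $y$.
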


\begin{proof}
Let $a,b,c \in R$ be of degree less than $d$, and $u,v,w,z$ be as defined in the proof of Theorem \ref{thm:rnucleus}. We saw that $(a \circ_f b) \circ_f c = a \circ_f (b \circ_f c) \Leftrightarrow ufc = 0 \mod f$.

We show that an element is in the left nucleus if and only if it has degree zero. First suppose $a$ has degree zero. Then for any $b$, $ab$ has degree strictly less than $d$, and hence $u=0$ for all $b$. Therefore $ufc = 0 \mod f$ for all $b,c$, and so $a \in \NN_l$.

Suppose now $\deg(a)=r>0$, and let $a_r$ be the leading coefficient of $a$. Let $b = \frac{1}{a_r ^{-\sigma^r}} t^{d-r}$. Then $ab$ is monic, and has degree $d$, and so $u = 1$. Let $c$ be some element not in $E(f)$, i.e. $fc \ne 0 \mod f$. We know that such an element exists as $f \notin Z(R)$. Then $ufc = fc \ne 0 \mod f$, and so $a \notin \NN_l$.

The proof for $\NN_m$ is similar.

The centre is a subfield of $\NN_l$, and so consists of all constant polynomials which commute with $t$. Since $ta = a^{\sigma}t$ for all $a\in K$, the centre is therefore equal to the fixed field of $\sigma$, which is $F$.
\end{proof}

Later we will show that $|N_r(\SSS_f)| = q^d$. The nuclei of $\SSS_f$ were calculated in a different way by Dempwolff in \cite{Dempwolff2011}, when he calculated the nuclei of cyclic semifields, which we will show in Section \ref{sect:cyclicequiv} to be equivalent to this construction.

Hence if two semifields defined by polynomials $f \in K[t,\sigma]$ and $f' \in K'[t,\sigma']$ are isotopic, then $K = K'$, $\deg(f) = \deg(f')$, and $\sigma$ and $\sigma'$ have the same fixed field (i.e. the same order). In the next section we will investigate when two such semifields are isotopic. 

\section{Isotopisms between semifields $\SSS_f$}
In this section we will first consider some properties of skew polynomial rings, which we will use later to obtain isotopisms of the above defined semifields.

\begin{lemma}
Let $\phi$ be an automorphism of $R = K[t;\sigma]$, where $\sigma$ is not the identity automorphism. Then
\[
\phi(f) = f^{\rho}(\alpha t)
\]
where $\rho \in Aut(K)$ and $\alpha \in \Fqn^{\times}$.
\end{lemma}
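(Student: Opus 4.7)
The plan is to determine the automorphism $\phi$ from its values on $K$ and on $t$, then use the defining relation of $R$ together with the surjectivity of $\phi$ to pin these down. Since $R$ is a domain with additive degree function, the group of units of $R$ equals $K^{\times}$; any automorphism must permute units, so $\phi(K^{\times}) = K^{\times}$, and together with $\phi(0) = 0$ this shows that $\rho := \phi|_{K}$ is an automorphism of $K$.

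Setting $g := \phi(t) = \sum_{i=0}^{m} g_{i} t^{i}$ and applying $\phi$ to the defining relation $ta = \sigma(a)t$ gives $g\,\rho(a) = \rho\sigma(a)\,g$ for every $a \in K$. Expanding the left-hand side via $t^{i}\rho(a) = \sigma^{i}\rho(a)\, t^{i}$ and comparing coefficients of $t^{i}$, which is legitimate because $K$ is commutative, yields
\[
g_{i}\bigl(\sigma^{i}\rho(a) - \rho\sigma(a)\bigr) = 0 \qquad \text{for all } a \in K.
\]
Hence $g_{i} = 0$ unless $\sigma^{i}\rho = \rho\sigma$ in $Aut(K)$. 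Because $Aut(\Fqn)$ is cyclic, and therefore abelian, this condition is equivalent to $\sigma^{i} = \sigma$, i.e.\ $n \mid i-1$; in particular $g_{0} = 0$ since $\sigma$ is not the identity.

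The final step uses surjectivity of $\phi$ to force $m = 1$. A short induction shows that the leading term of $g^{d}$ equals the nonzero monomial
\[
g_{m}\,\sigma^{m}(g_{m})\,\sigma^{2m}(g_{m})\cdots\sigma^{(d-1)m}(g_{m})\,t^{dm},
\]
so that $\phi(h) = \sum_{j}\rho(h_{j})g^{j}$ has degree exactly $dm$ whenever $\deg(h) = d$. Thus every element of the image of $\phi$ has degree divisible by $m$, and surjectivity forces $m = 1$. Then $g = \alpha t$ with $\alpha := g_{1} \in \Fqn^{\times}$, and for any $f = \sum f_{i} t^{i}$ we obtain $\phi(f) = \sum \rho(f_{i})(\alpha t)^{i} = f^{\rho}(\alpha t)$, as required.

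The step I expect to be the main obstacle is the last one: checking that no cancellation takes place among the leading terms of the $\rho(h_{j})g^{j}$, so that the image of $\phi$ really only contains elements whose degree is a multiple of $m$. The bookkeeping is mildly delicate because of the non-commutativity, but the leading coefficient of $g^{d}$ is just a product of Frobenius twists of $g_{m}$ and hence never vanishes.
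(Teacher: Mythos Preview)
Your proof is correct and follows essentially the same line as the paper's, but with considerably more care. The paper simply asserts that a bijective ring homomorphism of $R$ must preserve degree (so $\phi(t)=\alpha t+\beta$) and that $\phi|_K$ is a field automorphism, then picks a single $\gamma$ with $\gamma^{\sigma}\ne\gamma$ and computes $\phi(t)\phi(\gamma)=\phi(\gamma^{\sigma})\phi(t)$ to force $\beta=0$. Your units argument justifies the restriction to $K$, and your surjectivity/leading-term computation is exactly the missing justification for the paper's ``bijective $\Rightarrow$ degree-preserving'' step; the coefficient comparison you do for all $i$ is the general form of the paper's one-element check that kills the constant term.
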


\begin{proof}
As $\phi$ is bijective, it preserves the degree of elements of $R$. Let $\rho$ be the field automorphism obtained by the restriction of $\phi$ to $K$, and assume $\phi(t)= \alpha t+ \beta$, $\alpha,\beta \in K$, $\alpha \ne 0$. Choose $\gamma \in K$ such that $\gamma^{\sigma}\ne \gamma$. Computing $\phi(t)\phi(\gamma)= \phi(t\gamma)=\phi(\gamma^{\sigma}t) = \phi(\gamma^{\sigma})\phi(t)$, we see that $\beta = 0$, and the assertion follows.
\end{proof}

Automorphisms of $R$ can be used to define isomorphisms between semifields.

\begin{theorem}\label{thm:isom}
Let $f$ be an irreducible of degree $d$ in $R$. Let $\phi$ be an automorphism of $R$. Define $g = \phi(f)$. Then $\SSS_f$ and $\SSS_g$ are isomorphic, and 
\[
\phi(a \circ_f b) = \phi(a) \circ_g \phi(b).
\] 
\end{theorem}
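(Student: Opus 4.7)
The strategy is to show that the ring automorphism $\phi$, restricted to the subspace $V$ of polynomials of degree less than $d$, is the desired semifield isomorphism. The key observation is that $\phi$ commutes with right Euclidean division by $f$ in a precise sense.

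First I would record that $\phi$ preserves degrees. By the preceding lemma, $\phi(t) = \alpha t$ for some $\alpha \in K^{\times}$ and $\phi|_K$ is a field automorphism $\rho$, so for any $h = \sum h_i t^i \in R$ we have $\phi(h) = \sum h_i^{\rho} \alpha^{i}\cdots\, t^i$ (with the appropriate $\sigma$-twisted coefficients coming from pushing $\alpha t$ past the $h_i^\rho$), which still has degree exactly $\deg(h)$. In particular, $\phi$ restricts to an additive bijection $V \to V$, and $\deg(g) = \deg(f) = d$.

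Next I would verify the multiplicative identity $\phi(a \circ_f b) = \phi(a) \circ_g \phi(b)$ by exploiting uniqueness of right division. Given $a,b \in V$, write the right Euclidean division in $R$ as
\[
ab = qf + r, \qquad \deg(r) < d,
\]
so that $a \circ_f b = r$. Applying the ring homomorphism $\phi$,
\[
\phi(a)\phi(b) = \phi(q)\phi(f) + \phi(r) = \phi(q)\, g + \phi(r).
\]
Since $\phi$ preserves degree, $\deg(\phi(r)) = \deg(r) < d = \deg(g)$, so by the uniqueness of remainder on right division by $g$ we conclude
\[
\phi(a) \circ_g \phi(b) = \phi(r) = \phi(a \circ_f b),
\]
which is exactly the stated formula.

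Finally, to conclude that $\SSS_f$ and $\SSS_g$ are isomorphic as semifields, I would note that $\phi|_V : \SSS_f \to \SSS_g$ is additive, multiplicative with respect to $\circ_f$ and $\circ_g$ by the above, and a bijection because $\phi$ is a bijection of $R$ preserving the degree filtration. Moreover $\phi(1)=1$ since $\phi$ is a ring automorphism, so it maps the multiplicative identity of $\SSS_f$ to that of $\SSS_g$. There is no real obstacle here: the whole content is that $\phi$ respects the Euclidean decomposition, and this is immediate from uniqueness of the remainder once the degree-preservation of $\phi$ has been established.
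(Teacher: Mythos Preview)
Your argument is correct and is precisely the natural proof; in fact the paper does not supply a proof at all, writing only ``The proof is left to the reader.'' The one small point you might add for completeness is that $g=\phi(f)$ is again irreducible (so that $\SSS_g$ is indeed a semifield): if $g=ab$ with $\deg(a),\deg(b)<d$, then $f=\phi^{-1}(a)\phi^{-1}(b)$ with the same degrees, contradicting irreducibility of $f$. Otherwise nothing is missing.
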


The proof is left to the reader. We now consider another type of isotopism between these semifields.

\begin{definition}
Let $f$ and $g$ be monic irreducibles of degree $d$ in $R$. We say that $f$ and $g$ are \emph{similar} if there exists a non-zero element $u$ of $R$ of degree less than $d$ such that
\[
gu \equiv 0 \mod f.
\]
\end{definition}

\begin{theorem}\label{thm:similar}
Suppose $f$ and $g$ are similar. Then $\SSS_f$ and $\SSS_g$ are isotopic, and
\[
(a \circ_g b)^{H}= a \circ_f  b^{H}
\]
where $b^H = b\circ_f u$, $gu \equiv 0 \mod f$.
\end{theorem}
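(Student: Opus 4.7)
The plan is to exhibit the isotopism explicitly as the triple $(\mathrm{id}, H, H)$ from $\SSS_g$ to $\SSS_f$, where $H\colon V\to V$ is right multiplication by $u$ in $\SSS_f$, i.e.\ $H(b)=b\circ_f u$. The entire content of the theorem is the identity $H(a\circ_g b)=a\circ_f H(b)$, which I would verify by computing the product $abu$ in $R$ and reducing modulo $f$ in two different ways; associativity of the ambient ring $R$ (part (1) of Theorem \ref{Oreprop}) is what makes the two reductions agree.

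Preparation: using the similarity hypothesis, I would first write $gu = qf$ in $R$ for some $q\in R$. Since $\deg(gu)=d+\deg(u)<2d$ we have $\deg(q)<d$, and $q\neq 0$ because $g,u\neq 0$ and $R$ is a domain. Next, I would check that $H$ is an additive bijection of $V$: additivity follows from right-distributivity in $R$, and injectivity follows because $\SSS_f$ has no zero divisors, so $b\circ_f u=0$ with $u\neq 0$ forces $b=0$. Finiteness of $V$ then gives bijectivity. (In fact $H$ is $F$-linear, which is more than enough for an isotopism.)

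Now the main computation. Given $a,b\in V$, the first reduction uses right division by $g$: write $ab=pg+(a\circ_g b)$ in $R$. Multiplying on the right by $u$ and using $gu=qf$ gives
\[
abu \;=\; p(gu) + (a\circ_g b)u \;=\; (pq)f + (a\circ_g b)u,
\]
so $abu \equiv (a\circ_g b)u \pmod f$, and therefore
\[
abu \bmod f \;=\; (a\circ_g b)\circ_f u \;=\; H(a\circ_g b).
\]
The second reduction uses right division of $bu$ by $f$: write $bu=sf+b^H$. Then associativity of $R$ gives
\[
abu \;=\; a(bu) \;=\; (as)f + ab^H,
\]
so $abu \equiv ab^H \pmod f$, and therefore $abu \bmod f = a\circ_f b^H = a\circ_f H(b)$. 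Equating the two expressions for $abu \bmod f$ yields the desired identity $H(a\circ_g b)=a\circ_f H(b)$, which says that $(\mathrm{id},H,H)$ is an isotopism from $\SSS_g$ to $\SSS_f$.

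There is no real obstacle: once one thinks of computing $abu$ both as $(ab)u$ (reducing $ab$ mod $g$ and exploiting $gu\in Rf$) and as $a(bu)$ (reducing $bu$ mod $f$), Ore's theorem that $R$ is associative does all the work. The only points that need a moment's care are the bijectivity of $H$ (for which one needs $u\neq 0$ and the absence of zero divisors in $\SSS_f$) and the bookkeeping to see that $q$ has degree $<d$, ensuring the reduction is legal.
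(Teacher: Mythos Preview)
Your proof is correct and follows essentially the same approach as the paper: both compute $abu \bmod f$ in two ways---once via $ab = pg + (a\circ_g b)$ together with the hypothesis $gu\in Rf$, and once via $bu = sf + b^H$---and equate them using associativity of $R$. Your version is in fact slightly more complete, since you explicitly verify the bijectivity of $H$ (which the paper leaves implicit); the bookkeeping remark about $\deg(q)<d$ is harmless but not actually needed anywhere in the argument.
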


\begin{proof}
Let $a \circ_g b = ab - vg$. Then 
\[
(a \circ_g b)^H = (ab-vg)^H = (ab-vg)u \mod f = (abu - vgu) \mod f  \equiv abu \mod f
\]
as $gu \equiv 0 \mod f$.

Let $b \circ_f u = bu - wf$. Then
\[
a \circ_f b^H = a \circ_f (b \circ_f u) = a \circ_f (bu-wf)
\]
\[
= a(bu-wf) \mod f \equiv abu \mod f
\]
and the result holds.
\end{proof}

In  \cite{Jacobson1937} Jacobson investigated when two skew polynomials are similar. We include a proof here for completeness, and because some of the concepts introduced will be of use later in this paper.

\begin{definition} Let $f \in R$ be irreducible of degree $d$. Define the \emph{minimal central left multiple} of $f$, denoted by $mzlm(f)$, as the monic polynomial of minimal degree in the centre $Z \simeq \Fq[t^n;\sigma] \simeq \Fq[y]$ that is right-divisible by $f$.
\end{definition}

In \cite{Giesbrecht1998} Giesbrecht showed that $mzlm(f)$ exists, is unique, has degree $d$ and is irreducible when viewed as an element of $\Fq[y]$ (which we state in the next lemma). Note that this is related to the \emph{bound} of $f$: if $t$ does not divide $f$, then $R.mzlm(f)$ is the largest two-sided ideal of $R$ contained in the left ideal $R.f$. See for example \cite{Jacobson1943}.

\begin{lemma}[\cite{Giesbrecht1998}]
Let $f \in R$ be irreducible of degree $d$. Let $mzlm(f) =\hat{f}(t^n)$ for some $\hat{f} \in \Fq[y]$. Then $\hat{f}$ is irreducible.
\end{lemma}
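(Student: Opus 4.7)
The plan is to argue by contradiction. Suppose $\hat f$ factors nontrivially in $\Fq[y]$ as $\hat f = gh$ with $0 < \deg_y g, \deg_y h < d$. Substituting $y = t^n$ produces the identity $\hat f(t^n) = g(t^n) h(t^n)$ in $Z(R)$, in which both factors are central and have $t$-degree strictly less than $dn = \deg_t(mzlm(f))$.

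The core step is to show that $f$ right-divides one of $g(t^n)$ or $h(t^n)$. Since $R$ is a left principal ideal domain (Theorem \ref{Oreprop}(4)), the left ideal $Rf + Rg(t^n)$ is principal, say equal to $Rm$. Then $m$ right-divides both $f$ and $g(t^n)$. Writing $f = sm$ and using the irreducibility of $f$, either $s$ or $m$ must be a unit (a nonzero element of $K$). In the first case $Rm = Rf$, so $f$ right-divides $g(t^n)$. In the second case $m$ is a unit, so a Bezout relation produces $u, v \in R$ with $uf + v g(t^n) = 1$. Multiplying on the left by $h(t^n)$ yields
\[
h(t^n) u f + h(t^n) v g(t^n) = h(t^n).
\]
Centrality of $h(t^n)$ allows us to rewrite $h(t^n) v g(t^n) = v g(t^n) h(t^n) = v \hat f(t^n)$, which lies in $Rf$ since $f$ right-divides $\hat f(t^n)$; hence $f$ right-divides $h(t^n)$.

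Either conclusion produces a central polynomial of $t$-degree strictly less than $dn$ which is right-divisible by $f$, contradicting the minimality in the definition of $mzlm(f)$. The main subtlety I expect to watch is that irreducibility in a noncommutative PID does not by itself imply primality, so one cannot immediately pass from $f \mid g(t^n) h(t^n)$ to $f \mid g(t^n)$ or $f \mid h(t^n)$; the Bezout argument works here precisely because both factors lie in $Z(R)$, which is exactly what permits commuting $v$ past $h(t^n)$ and $g(t^n)$ past $h(t^n)$ in the key step.
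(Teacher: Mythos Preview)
Your argument is correct. The key observation---that although irreducibles in a noncommutative PID need not be prime, the Bezout identity $uf + vg(t^n) = 1$ can be pushed through here because both $g(t^n)$ and $h(t^n)$ lie in $Z(R)$---is exactly the right one, and you handle it cleanly. One cosmetic point: multiplying the Bezout relation on the \emph{right} by $h(t^n)$ is slightly tidier, since then $uf\,h(t^n) = u\,h(t^n)f \in Rf$ and $v\,g(t^n)h(t^n) = v\hat f(t^n) \in Rf$ each require only a single use of centrality; but your left-multiplication version is equally valid.

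As for comparison with the paper: the paper does not prove this lemma at all. It is stated with a citation to Giesbrecht \cite{Giesbrecht1998} and used as a black box. Your proof is therefore a self-contained replacement for that citation, relying only on the facts about $R$ already recorded in Theorem~\ref{Oreprop} (that $R$ is a left PID and that $Z(R) = F[t^n;\sigma]$). This is a genuine addition rather than a rederivation of something already in the text.
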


\begin{lemma}\label{thm:fullmatrix}
Let $h$ be an element of $R$ such that $h = \hat{h}(t^n)$, where $\hat{h} \in \Fq[y]$ is monic, irreducible and has degree $d$ in $y$ and $\hat{h} \ne y$. Then
\begin{enumerate}
	\item[(1)]
		\[
		\frac{R}{Rh} \simeq M_n(\Fqd);
		\]
	\item[(2)]
		any irreducible divisor $f$ of $h = \hat{h}(t^n)$ has degree $d$;
	\item[(3)]
		if $A$ denotes the isomorphism of part (1), and $f$ is an irreducible (right) divisor of $h$, then the matrix $A(f+Rh)$ has rank $n-1$.
\end{enumerate}
\end{lemma}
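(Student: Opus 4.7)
The plan is to prove (1) by identifying the quotient $\bar R := R/Rh$ as a central simple $L$-algebra with $L \cong \Fqd$ and then applying Artin--Wedderburn together with Wedderburn's little theorem; parts (2) and (3) then follow respectively from Giesbrecht's minimal-central-left-multiple machinery and a direct dimension count in $M_n(L)$.

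For (1), since $h = \hat{h}(t^n) \in Z(R)$ the left ideal $Rh = hR$ is two-sided, so $\bar R$ is an $\Fq$-algebra of dimension $n^2 d$. I first compute its centre: writing an element uniquely as $r = \sum_{i=0}^{nd-1} r_i t^i$ with $r_i \in K$, the commutators $[r,\alpha]$ (for $\alpha \in K$) and $[r,t]$ have degree strictly less than $nd$, so they vanish in $\bar R$ iff they vanish in $R$. The conditions $[r,\alpha] = 0$ for all $\alpha$ and $[r,t] = 0$ then force $r_i \in \Fq$ and $r_i = 0$ whenever $n \nmid i$, giving $Z(\bar R) = Z/Zh \cong \Fq[y]/(\hat h) \cong \Fqd =: L$. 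For simplicity, I invoke the standard description of two-sided ideals of $K[t;\sigma]$ as those generated by $t^a\, g(t^n)$ with $g \in \Fq[y]$; since $\hat h \ne y$ (so $t \nmid h$), the two-sided ideals containing $Rh$ correspond bijectively to divisors of $\hat h$ in $\Fq[y]$, and irreducibility of $\hat h$ leaves only $Rh$ and $R$. Artin--Wedderburn now gives $\bar R \cong M_k(D)$ with $D$ a finite division algebra of centre $L$; Wedderburn's little theorem forces $D$ to be a field, hence $D = L$ by commutativity, and comparing $L$-dimensions yields $k = n$.

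For (2), let $f$ be an irreducible right divisor of $h$ of degree $d_f$. From $Rh \subseteq Rf$ and $Rh$ two-sided, $Rh$ sits inside the largest two-sided ideal contained in $Rf$, which by definition is $R \cdot mzlm(f)$. Descending to the centre $Z \cong \Fq[y]$, $\widehat{mzlm(f)}$ divides $\hat h$; irreducibility of $\hat h$ together with $mzlm(f) \ne 1$ (since $f$ is a non-unit) force $\widehat{mzlm(f)} = \hat h$. The Giesbrecht lemma stated just above then gives $d_f = \deg\widehat{mzlm(f)} = d$. For (3), under the isomorphism $A \colon \bar R \to M_n(L)$, the left ideal $\bar R \cdot (f + Rh)$ corresponds to $M_n(L) \cdot A(f+Rh)$, whose $L$-dimension equals $n \cdot \rank A(f+Rh)$. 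But $\bar R \cdot (f + Rh) = (Rf + Rh)/Rh = Rf/Rh$ since $Rh \subseteq Rf$, and this has $\Fq$-dimension $n^2 d - n d_f = nd(n-1)$ by (2), i.e.\ $L$-dimension $n(n-1)$. Equating gives $\rank A(f+Rh) = n - 1$.

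The delicate step is the simplicity assertion in (1): it rests on the classification of two-sided ideals in the skew polynomial ring $K[t;\sigma]$, where the hypothesis $\hat h \ne y$ is essential to rule out any extraneous $t^a$ factor. Once (1) is secured, (2) is a formal consequence of Giesbrecht's description of $mzlm(f)$ coupled with the existence of a largest two-sided ideal inside a left ideal, and (3) reduces to a one-line dimension count.
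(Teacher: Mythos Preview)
Your argument is correct and for parts (1) and (3) matches the paper's proof almost verbatim: both establish simplicity of $R/Rh$ via the classification of two-sided ideals in $K[t;\sigma]$ (using $\hat h\ne y$ to exclude a $t$-factor), compute the centre as $\Fq[y]/(\hat h)\cong\Fqd$, invoke Artin--Wedderburn, and read off the rank of $A(f+Rh)$ from the $\Fq$-dimension of $Rf/Rh$.

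One small imprecision: the commutator $[r,t]=\sum_i(r_i-r_i^{\sigma})t^{i+1}$ can have degree $nd$, not strictly less. The fix is to impose $[r,\alpha]=0$ first (this has degree $<nd$ and forces $r_i=0$ unless $n\mid i$, in particular $r_{nd-1}=0$ when $n>1$), after which $[r,t]$ genuinely has degree $<nd$; alternatively, note $[r,t]$ has zero constant term while $h$ does not.

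The only substantive divergence is in part (2). The paper argues directly inside $M_n(\Fqd)$: an irreducible right divisor $f$ of degree $r$ gives a maximal left ideal $(R/Rh)f$ of $\Fq$-dimension $n^2d-nr$, while every maximal left ideal of $M_n(\Fqd)$ is an annihilator $\mathrm{Ann}_M(U)$ of a line and has $\Fq$-dimension $(n^2-n)d$, forcing $r=d$. You instead route through the bound/$mzlm$ machinery: $Rh$ is two-sided and contained in $Rf$, hence contained in $R\cdot mzlm(f)$, so $\widehat{mzlm(f)}\mid\hat h$ in $\Fq[y]$, whence $\widehat{mzlm(f)}=\hat h$ and Giesbrecht's lemma gives $\deg f=\deg\widehat{mzlm(f)}=d$. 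Both arguments are short; the paper's is self-contained within the matrix-algebra description already set up, while yours leans on Giesbrecht's externally-proved degree statement but has the virtue of making the $mzlm$ of any irreducible divisor of $h$ explicit (it equals $h$), a fact used again in Theorem~\ref{thm:equivalence1}.
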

By abuse of notation we will write $A(a) = A(a+Rh)$ for $a \in R$.
\begin{proof}
(1) First we show that $Rh$ is a maximal two-sided ideal in $R$. For suppose there exists some $g \in R$ such that $Rg$ is a two-sided ideal, $\deg(g) < \deg(h)$ and $Rh \subset Rg$. Then
\[
g = \hat{g}(t^n) t^s
\]
for some $\hat{g}\in \Fq[y]$ (see for example \cite{JacobsonBook} Theorem 1.2.22). As $t$ does not divide $h$, we must have that $s = 0$, and
\[
h = ag
\]
for some $a \in R$. As $h$ and $g$ are in the centre of $R$, $a$ must also be in the centre of $R$, and so $a = \hat{a}(t^n)$ for some $\hat{a} \in \Fq[y]$. But then
\[
\hat{h}(y) = \hat{a}(y) \hat{g}(y)
\]
As $\hat{h}$ is irreducible in $\Fq[y]$, we must have $\hat{g} \in \Fq$, and so $g \in \Fq$. Therefore $Rg = R$, proving that $Rh$ is maximal.

It follows that $\frac{R}{Rh}$ is a finite simple algebra and hence isomorphic to a full matrix algebra over its centre (\cite{Lang} Chapter 17). It is easily shown (see for example \cite{Giesbrecht1998}, proof of Theorem 4.3) that the centre $Z\left (\frac{R}{Rh}\right )$ is the image of the centre of $R$, and is given by
\[
Z\left (\frac{R}{Rh} \right ) = \frac{Z(R)+Rh}{Rh} \simeq \frac{\Fq[y]}{\Fq[y] \hat{h}(y)} \simeq {\Fqd}
\]
as $\hat{h}$ is a degree $d$ irreducible in $\Fq[y]$.

As the dimension of $\frac{R}{Rh}$ as a vector space over $\Fq$ is $n^2 d$, we see that 
\[
\frac{R}{Rh} \simeq M_n(\Fqd)
\]
as claimed.

(2) Let $f$ be an irreducible divisor of $h$, and let $r = \deg(f)$. Then $f$ generates a maximal left ideal in $R$, and also in $\frac{R}{Rh}$. This maximal left ideal $\left(\frac{R}{Rh}\right) f$ is then $(n^2 d - nr)$-dimensional over $\Fq$.

By part (1), we know that $\frac{R}{Rh}$ is isomorphic to $M := M_n(\Fqd)$. It is well known that maximal left ideals in $M$ are all of the form $Ann_M(U)$ for some 1-dimensional space $U < (\Fqd)^n$, and are $(n^2-n)$-dimensional over $\Fqd$, and hence $(n^2-n)d$-dimensional over $\Fq$. Therefore $r = d$, as claimed.

(3) The left ideal $M.A(f)$ is equal to $Ann_M(Ker(A(f)))$, and so $A(f)$ has rank $n-1$ as claimed.
\end{proof}

\begin{remark} Hence the number of monic irreducible elements of degree $d$ in $R$ can be seen to be
\[
N(q,d) \left( \frac{q^{nd}-1}{q^d - 1} \right).
\]
This was calculated by Odoni \cite{Odoni1999}, and is an upper bound for $A(q,n,d)$. However, we will see that this is far from optimal.
\end{remark}

\begin{lemma}\label{lem:size_eigenring}
If $f \in R$ is irreducible of degree $d$, then $|E(f)| = q^d$.
\end{lemma}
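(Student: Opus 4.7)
The plan is to identify $E(f)$ with a representation-theoretic object and then compute it using the matrix algebra structure from Lemma \ref{thm:fullmatrix}. The key observation is that $E(f)$ is naturally (anti-)isomorphic to $\mathrm{End}_R(R/Rf)$, which, after passing through the minimal central left multiple of $f$, becomes an endomorphism ring over $M_n(\Fqd)$ and is easily evaluated.

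First, I would show that the map $u \mapsto \phi_u$, where $\phi_u(r + Rf) = ru + Rf$, gives a ring anti-isomorphism $E(f) \simeq \mathrm{End}_R(R/Rf)^{\mathrm{op}}$. The map $\phi_u$ is a well-defined left $R$-module endomorphism precisely when $(Rf)u \subseteq Rf$, i.e.\ when $fu \in Rf$, which is exactly the condition $u \in E(f)$. Conversely, any $R$-linear $\phi : R/Rf \to R/Rf$ is determined by $\phi(1+Rf) = u+Rf$ for a unique $u$ of degree less than $d$, and well-definedness forces $u \in E(f)$. A short computation $\phi_u \circ \phi_v = \phi_{vu}$ confirms the anti-isomorphism; for our purposes only the equality of cardinalities matters.

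Next, let $h = \mathrm{mzlm}(f) = \hat{h}(t^n)$, which exists and has $\hat{h} \in \Fq[y]$ monic irreducible of degree $d$ by Giesbrecht's lemma. Since $f$ right-divides $h$, we have $Rh \subseteq Rf$, so the left $R$-action on $R/Rf$ factors through the quotient $R/Rh$. By Lemma \ref{thm:fullmatrix}(1), $R/Rh \simeq M_n(\Fqd)$. The irreducibility of $f$ makes $Rf$ a maximal left ideal of $R$, hence $R/Rf$ is a simple left $R/Rh$-module. Since $M_n(\Fqd)$ is simple Artinian, all of its simple left modules are isomorphic to the standard column module $V := (\Fqd)^n$; comparing $\Fq$-dimensions (both are $nd$) confirms $R/Rf \simeq V$ as $R/Rh$-modules.

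Finally, a direct calculation with matrix units (or Schur's lemma combined with the identification of the centre of $M_n(\Fqd)$) gives $\mathrm{End}_{M_n(\Fqd)}(V) \simeq \Fqd$, so
\[
|E(f)| = |\mathrm{End}_R(R/Rf)| = |\mathrm{End}_{M_n(\Fqd)}(V)| = q^d.
\]
The only genuinely non-routine step is the passage to $R/Rh$, which requires that Lemma \ref{thm:fullmatrix} be applicable, i.e.\ $\hat{h} \ne y$; this holds whenever $f \ne t$, which is the implicit standing assumption in this part of the paper (indeed $f = t$ would already contradict the nuclei description in Theorems \ref{thm:rnucleus}-\ref{thm:lmnuclei}). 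Beyond this mild caveat, no substantive obstacle arises; everything reduces to the classical structure theory of simple Artinian algebras.
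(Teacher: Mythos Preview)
Your argument is correct. Both your proof and the paper's hinge on Lemma~\ref{thm:fullmatrix}, and both carry the same implicit exclusion of the degenerate case $f=t$ (where indeed $|E(t)|=q^n$, so the lemma would fail). Beyond that, the routes diverge.

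The paper works \emph{inside} the matrix algebra. It introduces the set $E' = \{u+Rh : (f+Rh)(u+Rh)\in (R/Rh)(f+Rh)\}$ and computes $|E'|$ in two ways: first, by writing $u = af+u'$ with $u'\in E(f)$, giving $|E'| = q^{dn(n-1)}|E(f)|$; second, by passing to $M_n(\Fqd)$ via $A$, noting that $A(f)$ has rank $n-1$ with kernel $\langle v\rangle$, and observing that $A(f)A(u)\in M\cdot A(f)$ forces $A(u)v=\lambda v$, whence $|E'| = q^d\,|\mathrm{Ann}_M(\langle v\rangle)| = q^{d(n^2-n+1)}$. Equating the two counts yields $|E(f)|=q^d$.

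Your approach bypasses this double count by recognising $E(f)$ as (anti-isomorphic to) $\mathrm{End}_R(R/Rf)$, then invoking the module theory of the simple Artinian ring $R/Rh\simeq M_n(\Fqd)$: the simple module $R/Rf$ must be the column space $(\Fqd)^n$, whose endomorphism ring is $\Fqd$. This is cleaner and more conceptual---it explains \emph{why} the answer is a field of the right size, not just that the cardinalities match---whereas the paper's explicit eigenvector description has the advantage of being self-contained for a reader unfamiliar with Morita-type statements and of making the remark following the lemma (that $E(f)$ consists of remainders of central elements) more transparent.
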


\begin{proof}
Let $u$ have degree less than $nd$, and let $u = af + u'$ for $\deg(u') < \deg(f)$. Then $fu \equiv 0 \mod f$ if and only if $u' \in E(f)$.
Let $E'$ be the set of all $u+Rh \in R/Rh$ such that $(f+Rh)(u+Rh)=(v+Rh)(f+Rh)$ for some $v+Rh \in R/Rh$. Then $u+Rh \in E'$ if and only if there exists some $v\in R$ such that $fu+Rh = vf+Rh$, which occurs if and only if there exists $v \in R$ such that $fu \equiv vf \mod h$. But then as $f$ divides $h$, we have $fu \equiv vf \mod f \equiv 0 \mod f$. Hence we have that
\begin{align*}
E' 	&= \{(af+u')+Rh : a \in R, \deg(a) <d(n-1),u' \in E(f)\} \\
		&= \frac{(E(f) + Rf)+Rh}{Rh}.
\end{align*}
Hence we have that $|E'| = q^{dn(n-1)}|E(f)|$.

By part (1) of Lemma \ref{thm:fullmatrix}, $\frac{R}{Rh} \simeq M_n(\Fqd) = M$. If $A$ denotes this isomorphism, then by part (3) of Lemma \ref{thm:fullmatrix}, $A(f) := A(f+Rh)$ has rank $n-1$. Let $Ker(A(f)) = <v>$ for $0 \ne v \in (\Fqd)^n$. Then 
\[
u+Rh \in E' \Leftrightarrow A(f)A(u) \in M.A(f) \Leftrightarrow A(u)v = \lambda v
\]
for some $\lambda \in \Fqd$. Then $A(u) - \lambda I \in Ann_M(v)$, and so 
\[
|E'| = q^d |Ann_M(v)| = q^{d(n^2-n+1)}.
\]
Hence from the two expressions for $|E'|$ we get $|E(f)| = q^d$, as claimed.
\end{proof}

\begin{remark}
We see that
\[
E(f) = \{ z \mod f ~:~z \in Z(R) \}
\]
i.e. the remainders of all central elements of $R$ on right division by $f$.
\end{remark}

The sizes of the nuclei and the centre of the semifield $\SSS_f$ now easily follow from
Theorems \ref{thm:rnucleus},\ref{thm:lmnuclei} and Lemma \ref{lem:size_eigenring}.
\begin{theorem} If $f\in R$ is irreducible of degree $d$, then for the nuclei and the centre of semifield $\SSS_f$ we have
$$(\#Z,\#\NN_l,\# \NN_m,\#\NN_r)  = (q,q^n,q^n,q^d).$$
\end{theorem}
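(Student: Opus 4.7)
The theorem is a direct consequence of the results assembled in Sections on nuclei, so the plan is essentially one of bookkeeping rather than new argument.

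First I would handle the left and middle nuclei together with the centre. Theorem \ref{thm:lmnuclei} gives $\NN_l(\SSS_f) = \NN_m(\SSS_f) = K.1$ and $Z(\SSS_f) = F.1$, provided $f \notin Z(R)$. Since $K = \Fqn$ and $F = \Fq$, these immediately yield the sizes $q^n$, $q^n$ and $q$ respectively. The one subtlety is the hypothesis $f \notin Z(R)$: if $f$ were central and irreducible in $R$, then $\SSS_f$ would be an associative finite division algebra by the corollary following Theorem \ref{thm:rnucleus}, i.e.\ a finite field, which by Wedderburn–Dickson is commutative; this is precisely ruled out (for nontrivial $\sigma$) by the remark after that corollary, which points out that Lemma \ref{thm:fullmatrix} forces every central element of $R$ to be reducible over a finite $K$ when $\sigma \ne \mathrm{id}$. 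Thus the hypothesis $f \notin Z(R)$ is automatic in our finite setting, and Theorem \ref{thm:lmnuclei} applies.

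Next I would handle the right nucleus. Theorem \ref{thm:rnucleus} identifies $\NN_r(\SSS_f) = E(f)$, and Lemma \ref{lem:size_eigenring} gives $|E(f)| = q^d$. Combining these yields $\#\NN_r = q^d$.

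Putting these four computations together produces
\[
(\#Z,\#\NN_l,\#\NN_m,\#\NN_r) = (q, q^n, q^n, q^d),
\]
as claimed. There is no real obstacle here: every ingredient has been proved earlier, and the only thing to check is the applicability of Theorem \ref{thm:lmnuclei}, which reduces to invoking the Wedderburn–Dickson observation recorded immediately after the corollary to Theorem \ref{thm:rnucleus}.
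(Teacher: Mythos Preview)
Your proposal is correct and follows exactly the paper's approach: the paper simply states that the sizes follow from Theorems \ref{thm:rnucleus}, \ref{thm:lmnuclei} and Lemma \ref{lem:size_eigenring}, which is precisely what you do. If anything, you are slightly more careful than the paper in explicitly verifying the hypothesis $f \notin Z(R)$ needed to invoke Theorem \ref{thm:lmnuclei}.
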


The following theorem tells us exactly when two irreducibles are similar.
\begin{theorem}\label{thm:equivalence1}
Let $f$ and $g$ are irreducible in $R$. Then $mzlm(g) = mzlm(f)$ if and only if $f$ and $g$ are similar.
\end{theorem}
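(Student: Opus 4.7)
My plan is to translate both sides of the equivalence into module-theoretic invariants of the simple left $R$-modules $M_f := R/Rf$ and $M_g := R/Rg$. The two intermediate claims I would establish are: (i) $f$ and $g$ are similar if and only if $M_f \cong M_g$ as left $R$-modules, and (ii) $mzlm(f)$ is the monic generator of the ideal $Z(R) \cap Rf$, which coincides with the $Z(R)$-annihilator of $M_f$.

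For (i), given a nonzero $u \in R$ of degree less than $d$ with $gu \equiv 0 \mod f$, I would check that $a + Rg \mapsto au + Rf$ defines a nonzero $R$-linear map $M_g \to M_f$, which must be an isomorphism by simplicity of $M_g$ (since $g$ is irreducible, $Rg$ is a maximal left ideal) and equality of finite $\Fq$-dimensions. For the converse, any isomorphism $\phi$ is determined by $\phi(1 + Rg) = u + Rf$ for some $u$ of degree less than $d$ with $u \ne 0$, and applying $\phi$ to the identity $g \cdot (1 + Rg) = 0$ in $M_g$ forces $gu \in Rf$.

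For (ii), the key point is that a central element $z$ lying in $Rf$, written as $z = bf$, satisfies $za = az = abf \in Rf$ for every $a \in R$, so $z$ annihilates $M_f$; the reverse inclusion is immediate upon evaluating the annihilator condition at $1 + Rf$. Thus $\mathrm{Ann}_{Z(R)}(M_f) = Z(R) \cap Rf$, which under the isomorphism $Z(R) \simeq \Fq[y]$ is a principal ideal whose monic generator is, by definition, $mzlm(f)$.

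Combining (i) and (ii) finishes the proof: similarity gives $M_f \cong M_g$, hence equal $Z(R)$-annihilators, hence $mzlm(f) = mzlm(g)$. Conversely, if $mzlm(f) = mzlm(g) = h$, then $f$ and $g$ both right-divide $h$, so $M_f$ and $M_g$ are simple left modules over $R/Rh \cong M_n(\Fqd)$ by Lemma \ref{thm:fullmatrix}; since this matrix algebra has a unique simple module up to isomorphism, $M_f \cong M_g$ and $f,g$ are similar. The main obstacle is the identification in (ii), where centrality of $z$ is what lets one upgrade the single relation $z \in Rf$ to $za \in Rf$ for all $a$; everything else is standard Schur/Wedderburn reasoning once the correct modules have been identified.
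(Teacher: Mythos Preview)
Your argument is correct and is a genuinely different, more module-theoretic route than the one in the paper. For the direction ``similar $\Rightarrow$ same $mzlm$'', the paper invokes an external multiplicativity fact $mzlm(ab) = mzlm(a)\,mzlm(b)$ when $\gcd_r(a,b)=1$ (citing Giesbrecht) and then appeals to unique factorisation in $\Fq[y]$; your identification of $mzlm(f)$ with the generator of $\mathrm{Ann}_{Z(R)}(M_f)$ makes this implication immediate and self-contained, since isomorphic modules have equal annihilators. For the converse, both proofs pass through Lemma~\ref{thm:fullmatrix} and the isomorphism $R/Rh \simeq M_n(\Fqd)$: the paper argues concretely that $A(f)$ and $A(g)$ both have rank $n-1$, produces invertible $A(u),A(v)$ with $A(u)A(f)=A(g)A(v)$, and lifts this back to a similarity in $R$; you phrase the same phenomenon abstractly as ``a simple Artinian ring has a unique simple module'', which is shorter but uses exactly the same structural input. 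The trade-off is that the paper's version is more hands-on and exhibits the element $u$ explicitly, while yours is cleaner and avoids the external citation in the reverse direction.
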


\begin{proof} Suppose first that $mzlm(g) = mzlm(f)$. Let $h$ denote $mzlm(f)$, and write $h = af$. Then $\frac{R}{Rh} \simeq M_n(\Fqd)$. As above, let $A$ denote this isomorphism. By Lemma \ref{thm:fullmatrix}, 
\[
\rank(A(f))=\rank(A(g)) = n-1,
\]
and the equality of ranks shows there exist there exist invertible matrices $A(u),A(v)$ such that $A(u)A(f) = A(g)A(v)$. Then $uf \equiv gv \mod h$, so there exists some $b$ such that
\[
gv = uf + bh = uf + baf = (u+ba)f.
\]
We can write $v = v' + cf$, where $\deg(v') < d$ and $v' \ne 0$ (for otherwise, $v = cf$, and so $v$ has non-trivial common divisor with $h$, so $A(v)$ is not invertible). Then
\[
g(v'+cf) = (u+ba)f
\]
\[
\Rightarrow gv' = (u+ba-gc)f \Rightarrow gv' = u'f
\]
and $g$ and $f$ are similar, as claimed.

Suppose now that $f$ and $g$ are similar. By definition, $gu = vf$ for some $u,v$ of degree less than $d$. It can be shown that
\[
mzlm(ab) = mzlm(a)mzlm(b)
\]
if $gcrd(a,b) = 1$. See for example \cite{Giesbrecht1998}. Hence 
\[
mzlm(v)mzlm(f) = mzlm(g)mzlm(v),
\]
and as $mzlm(f)$ and $mzlm(g)$ are irreducible in $\Fq[y]$, by uniqueness of factorization in $\Fq[y]$ the result follows.
\end{proof}

Hence the number of isotopy classes is upper bounded by the number of irreducible polynomials of degree $d$ in $\Fq[y]$. This was proved in a different way by Dempwolff \cite{Dempwolffprep}. The next theorem allows us to further improve this bound.

\begin{definition} Consider the group
\[
G = {\mathrm{\Gamma L}} (1,q) = \{ (\lambda,\rho) ~|~ \lambda \in \Fq^{\times}, \rho \in Aut(\Fq) \}.
\]
Define an action of $G$ on $I(q,d)$ by
\[
f^{(\lambda, \rho)}(y) = \lambda^{-d} f^{\rho}(\lambda y).
\]
\end{definition}

\begin{theorem}\label{thm:isotopism}
Let $f,g \in R$ be irreducibles of degree $d$, with $mzlm(f) = \hat{f}(t^n)$, $mzlm(g) = \hat{g}(t^n)$ for $\hat{f},\hat{g} \in \Fq[y]$. If
\[
\hat{g} = \hat{f}^{(\lambda, \rho)}
\]
for some $\lambda \in \Fq^{\times}$, $\rho \in Aut(\Fq)$, then $\SSS_f$ and $\SSS_g$ are isotopic.
\end{theorem}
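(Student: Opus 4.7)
The plan is to construct a ring automorphism $\phi$ of $R$ for which $\phi(f)$ is similar to $g$; then Theorem \ref{thm:isom} yields an isomorphism $\SSS_f \simeq \SSS_{\phi(f)}$, Theorem \ref{thm:similar} yields an isotopism $\SSS_{\phi(f)} \simeq \SSS_g$ (the hypothesis being supplied by Theorem \ref{thm:equivalence1}), and composing these produces the desired isotopism. The similarity will be detected by matching the minimal central left multiples.

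By the lemma characterising $\mathrm{Aut}(R)$, any automorphism of $R$ has the form $\phi|_K = \tilde\rho$, $\phi(t) = \alpha t$, where $\tilde\rho \in \mathrm{Aut}(K)$ commutes with $\sigma$ and $\alpha \in \Fqn^\times$. Given $(\lambda,\rho)$ from the hypothesis, I first lift $\rho \in \mathrm{Aut}(\Fq)$ to $\tilde\rho \in \mathrm{Aut}(\Fqn)$ by taking the same power of the prime-field Frobenius; since $\mathrm{Aut}(\Fqn)$ is cyclic and contains $\sigma$, $\tilde\rho$ automatically commutes with $\sigma$, so the prescription above genuinely defines a $\phi \in \mathrm{Aut}(R)$ for any $\alpha$. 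The parameter $\alpha$ will be pinned down in the last step.

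Next I compute $\mathrm{mzlm}(\phi(f))$. Because $\phi$ permutes $Z(R)$ and preserves right-divisibility, $\phi(\mathrm{mzlm}(f))$ is a central right-multiple of $\phi(f)$, and any central right-multiple of $\phi(f)$ pulls back under $\phi^{-1}$ to one of $f$; thus, after normalising, $\mathrm{mzlm}(\phi(f))$ is the monic scalar multiple of $\phi(\hat f(t^n))$. Iterating $ta = a^{\sigma} t$ one sees
\[
(\alpha t)^n = \alpha\, \alpha^{\sigma} \cdots \alpha^{\sigma^{n-1}}\, t^n = N_{\Fqn/\Fq}(\alpha)\, t^n,
\]
so writing $\mu := N_{\Fqn/\Fq}(\alpha) \in \Fq^\times$ and using $\tilde\rho|_{\Fq} = \rho$, a termwise calculation gives $\phi(\hat f(t^n)) = \hat f^{\rho}(\mu t^n)$. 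As a polynomial in $t^n$ this has leading coefficient $\mu^d$, whence
\[
\mathrm{mzlm}(\phi(f)) = \mu^{-d}\, \hat f^{\rho}(\mu t^n).
\]

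Finally, I choose $\alpha$ with $N_{\Fqn/\Fq}(\alpha) = \lambda$; this is possible because the norm map is surjective. For this $\alpha$, the polynomial obtained above becomes $\lambda^{-d} \hat f^{\rho}(\lambda t^n) = \hat f^{(\lambda,\rho)}(t^n) = \hat g(t^n) = \mathrm{mzlm}(g)$. Theorem \ref{thm:equivalence1} then gives that $\phi(f)$ and $g$ are similar, and the isotopism chain described in the opening paragraph completes the proof. The only real obstacle is the bookkeeping in the third paragraph: verifying $\phi(t^n) = \mu t^n$ and then that the coefficients of $\hat f$, which lie in $\Fq$, transform by $\rho$ rather than by some less controlled element of $\mathrm{Aut}(\Fqn)$. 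Once that identity is in hand, surjectivity of the norm and the earlier theorems do the rest.
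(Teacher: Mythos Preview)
Your proof is correct and follows essentially the same route as the paper: pick $\alpha$ with $N_{\Fqn/\Fq}(\alpha)=\lambda$, use the automorphism $\phi$ sending $t\mapsto\alpha t$ and acting by (a lift of) $\rho$ on coefficients, compute that $\phi(\hat f(t^n))=\hat f^{\rho}(\lambda t^n)$ so that $\mathrm{mzlm}(\phi(f))=\hat g(t^n)$, and then combine Theorem~\ref{thm:isom} with Theorems~\ref{thm:equivalence1} and~\ref{thm:similar}. The only cosmetic difference is that you are explicit about lifting $\rho\in\mathrm{Aut}(\Fq)$ to $\tilde\rho\in\mathrm{Aut}(\Fqn)$ and about why $\phi$ carries $\mathrm{mzlm}(f)$ to a scalar multiple of $\mathrm{mzlm}(\phi(f))$, points the paper handles more tersely.
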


\begin{proof}
Choose some $\alpha \in \Fqn$ such that
\[
N_{\Fqn / \Fq}(\alpha) = \lambda.
\]
Then 
\[
(\alpha t)^n = \alpha \alpha^{\sigma} \ldots \alpha^{\sigma^{n-1}} t^n = \lambda t^n.
\]
Define
\[
h(t) = f^{\rho} (\alpha t).
\]
By Theorem \ref{thm:isom}, $\SSS_f$ and $\SSS_h$ are isomorphic. Let $mzlm(h) = \hat{h}(t^n)$. 

Let $\phi$ be the automorphism of $R$ defined by $\phi(a) = a^{\rho}(\alpha t)$. Then as $\phi(f)=h$ and $\hat{f}(t^n) = uf$ for some $u \in R$, 
\[
\phi(\hat{f}(t^n)) = \phi(u)\phi(f) = \phi(u) h.
\]
But
\[
\phi(\hat{f}(t^n)) = \hat{f}^{\rho}((\alpha t)^n) = \hat{f}^{\rho}(\lambda t^n).
\]
As this is in the centre of $R$, and is divisible by $h$, we must have that $\hat{h}(y)$ divides $\hat{f}^{\rho}(\lambda t^n)$, and so, as their degrees are equal and both are monic,
\[
\hat{h}(y) = \lambda^{-d} \hat{f}^{\rho}(\lambda y) = \hat{g}(y).
\]
By Theorem \ref{thm:similar}, as $h$ and $g$ have the same minimal central left multiple, $\SSS_g$ and $\SSS_h$ are isotopic, and hence $\SSS_f$ and $\SSS_g$ are isotopic, as claimed.
\end{proof}

Hence the number of isotopy classes is upper bounded as follows.

\begin{theorem}\label{thm:bound}
The number of isotopism classes of semifields $\SSS_f$ of order $q^{nd}$ obtained from $\Fqn[t;\sigma]$ is less or equal to the number of $G$-orbits on the set of monic irreducible polynomials of degree $d$ in $\Fq[y]$.
\end{theorem}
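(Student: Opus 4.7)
The plan is to package the preceding theorems into a surjection from $G$-orbits on $I(q,d)$ onto the set of isotopism classes of semifields $\SSS_f$ of order $q^{nd}$ arising from $\Fqn[t;\sigma]$; the bound then follows by comparing cardinalities.

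First I would define the map. Given a $G$-orbit $\mathcal{O} \subset I(q,d)$, pick a representative $\hat h \in \mathcal{O}$, then pick any irreducible right divisor $f$ of $\hat h(t^n) \in Z(R)$; such an $f$ exists and has degree exactly $d$ by Lemma \ref{thm:fullmatrix}(2). Assign to $\mathcal{O}$ the isotopism class of $\SSS_f$. For this assignment to be well-defined I need two things. \emph{Independence of the choice of divisor $f$:} Giesbrecht's results (as recalled before Lemma \ref{thm:fullmatrix}) give $mzlm(f) = \hat f(t^n)$ with $\hat f$ monic of degree exactly $d$ in $\Fq[y]$, so $mzlm(f)$ is a monic central polynomial of degree $nd$; on the other hand $\hat h(t^n)$ is also a monic central polynomial of degree $nd$ right-divisible by $f$, and by the minimality and uniqueness characterizing $mzlm(f)$ we conclude $mzlm(f) = \hat h(t^n)$. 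The same conclusion holds for any other irreducible right divisor $g$ of $\hat h(t^n)$, so $mzlm(f) = mzlm(g)$; Theorem \ref{thm:equivalence1} then gives that $f$ and $g$ are similar, and Theorem \ref{thm:similar} yields that $\SSS_f$ and $\SSS_g$ are isotopic. \emph{Independence of the choice of representative in $\mathcal{O}$:} this is exactly Theorem \ref{thm:isotopism}.

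Next I would verify surjectivity, which is essentially tautological: every semifield $\SSS_f$ in the target arises from some irreducible $f$ of degree $d$ in $R$, and Giesbrecht's results guarantee $\hat f \in I(q,d)$ where $mzlm(f) = \hat f(t^n)$, so the $G$-orbit of $\hat f$ is sent back to the isotopism class of $\SSS_f$. A surjection from $G$-orbits on $I(q,d)$ onto isotopism classes therefore exists, and the stated bound follows.

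I expect no serious obstacle, since the substantive algebra has been carried out in Theorems \ref{thm:similar}, \ref{thm:equivalence1}, and \ref{thm:isotopism} together with Lemma \ref{thm:fullmatrix}. The only mildly delicate point is the uniqueness argument identifying $mzlm(f)$ with $\hat h(t^n)$; this can alternatively be established by working inside $R/R\hat h(t^n) \simeq M_n(\Fqd)$ and noting that a scalar matrix lying in the left ideal generated by the rank-$(n-1)$ matrix $A(f)$ of Lemma \ref{thm:fullmatrix}(3) is forced to be zero.
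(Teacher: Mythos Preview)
Your proposal is correct and uses the same core ingredients as the paper (Giesbrecht's results on $mzlm$ together with Theorem~\ref{thm:isotopism}), just packaged dually as a surjection from $G$-orbits to isotopism classes rather than as the paper's forward map $f \mapsto \hat f^G$. The paper's direction is slightly more economical since it avoids your well-definedness checks: once one observes that $\hat f \in I(q,d)$ and that $\hat f^G = \hat g^G$ forces $\SSS_f$ and $\SSS_g$ to be isotopic, the bound is immediate without ever needing to pick divisors of $\hat h(t^n)$ or invoke Theorems~\ref{thm:similar} and~\ref{thm:equivalence1} separately.
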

\begin{proof}
Suppose $f$ and $g$ are two monic irreducible polynomials in $\Fqn[t;\sigma]$ of degree $d$, 
with $mzlm(f) = \hat{f}(t^n)$, $mzlm(g) = \hat{g}(t^n)$ for $\hat{f},\hat{g} \in \Fq[y]$. Then by \cite{Giesbrecht1998},
 $\hat{f}$ and $\hat{g}$ are monic irreducible of degree $d$ in $\Fq[y]$. Moreover, if
$\hat{f}^G=\hat{g}^G$, then by Theorem \ref{thm:isotopism}, $\SSS_f$ and $\SSS_g$ are isotopic.
\end{proof}

In the next section we will relate this construction to the construction of Johnson-Jha, and in the last section we will compare this new bound to existing bounds. 

\section{Cyclic semifields and Endomorphisms of left multiplication}
\label{sect:cyclicequiv}
\begin{definition}
A \emph{semilinear transformation} on a vector space $V = K^d$ is an additive map $T:V \rightarrow V$ such that
\[
T(\alpha v) = \alpha^{\sigma}T(v)
\]
for all $\alpha \in K$, $v \in V$, for some $\sigma \in \mathrm{Aut}(K)$. The set of invertible semilinear transformations on $V$ form a group called the \emph{general semilinear group}, denoted by ${\mathrm{\Gamma L}} (d,K)$.
\end{definition}
Note that choosing a basis for $V$ gives us
\[
T(v) = A(v^{\sigma})
\]
where $A$ is some invertible $K$-linear transformation from $V$ to itself, $\sigma$ is an automorphism of $K$, and $v^{\sigma}$ is the vector obtained from $v$ by applying the automorphism $\sigma$ to each coordinate of $v$ with respect to this basis.

\begin{definition}
An element $T$ of ${\mathrm{\Gamma L}}(d,K)$ is said to be \emph{irreducible} if the only $T$-invariant subspaces of $V$ are $V$ and $\{0\}$.
\end{definition}

\begin{theorem}\label{thm:correspondence}
Let $\SSS_f$ be a semifield defined by an irreducible $f = t^d - \sum_{i = 0}^{d-1} f_i t^i$ in $R$, and let $L_t$ denote left multiplication by $t$ in $\SSS_f$. Then the following properties hold.
\begin{enumerate}
	\item
		$L_t$ is an element of ${\mathrm{\Gamma L}} (d,K)$ with accompanying automorphism $\sigma$.
	\item
		If we write elements $v = \sum_{i = 0}^{d-1} v_i t^i$ of $\SSS_f$ as column vectors $(v_0,v_1, \ldots ,v_{d-1})^t$, then
		\[
L_t (v)= A_f(v^{\sigma}) 
\]
where
\[
A_f = \left(\begin{array}{ccccc}
0 & 0 & \ldots &0& f_0 \\
1 & 0 & \ldots &0& f_1 \\
0 & 1 & \ldots &0& f_2 \\
\vdots & \vdots & \ldots &\vdots& \vdots \\
0& 0 & \ldots &1& f_{d-1} \end{array}\right).
\]
	\item
		If $a = \sum_{i = 0}^{d-1} a_i t^i$, then
		\[
		L_a = a(L_t) = \sum_{i = 0}^{d-1} a_i L_t^i.
		\]
	\item
		The semilinear transformation $L_t$ is irreducible.
\end{enumerate}
\end{theorem}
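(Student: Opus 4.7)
The plan is to verify the four assertions in order, each one essentially unpacking a definition using the relation $t\alpha=\alpha^\sigma t$ in $R$ together with the fact that multiplication by $K$-scalars commutes with right-reduction modulo $f$ (if $b=qf+r$ with $\deg r<d$, then $\alpha b=(\alpha q)f+\alpha r$, so $(\alpha b)\bmod f=\alpha(b\bmod f)$).

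For (1), I compute $L_t(\alpha v)=t\circ_f(\alpha v)=(t\alpha)v\bmod f=\alpha^\sigma(tv)\bmod f=\alpha^\sigma L_t(v)$, using $t\alpha=\alpha^\sigma t$ and the scalar-commutation observation above. Additivity is immediate from distributivity in $R$, and invertibility follows from the absence of zero divisors in $\SSS_f$. For (2), I evaluate $L_t$ on the basis $1,t,\dots,t^{d-1}$: for $i<d-1$ one has $L_t(t^i)=t^{i+1}$, giving the subdiagonal of ones; for $i=d-1$ one has $L_t(t^{d-1})=t^d\bmod f=\sum_{i=0}^{d-1}f_it^i$, giving the last column. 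Then one checks that $A_f(v^\sigma)$ reproduces $L_t(v)$ for a general $v=\sum v_it^i$ by combining semilinearity with the two basis computations.

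For (3), the key lemma is that for arbitrary $a,b\in R$, $(ab)\bmod f=a\cdot(b\bmod f)\bmod f$, because writing $b=qf+r$ gives $ab=aqf+ar$ and $aqf\equiv 0\bmod f$. Applied inductively to $t^i$, this yields $L_t^i(v)=t^iv\bmod f$. Then writing $a=\sum a_it^i$ and using distributivity together with the scalar-commutation fact,
\[
L_a(v)=av\bmod f=\sum_i a_i(t^iv)\bmod f=\sum_i a_i(t^iv\bmod f)=\sum_i a_iL_t^i(v),
\]
which is $a(L_t)(v)$. For (4), let $W\subseteq V$ be a nonzero $K$-subspace invariant under $L_t$. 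Since $W$ is a $K$-subspace closed under $L_t$, it is closed under every polynomial $a(L_t)=L_a$ by part (3). Fix $0\ne w\in W$; because $\SSS_f$ has no zero divisors, the map $a\mapsto a\circ_f w=L_a(w)$ is an injective $\Fq$-linear map $V\to V$ on a finite-dimensional space, hence surjective. Thus every element of $V$ lies in $W$, so $W=V$.

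The main conceptual obstacle is (3), where one must be careful that $\SSS_f$ is not associative: the identity $L_a=a(L_t)$ is \emph{not} immediate from associativity of $\circ_f$ (which fails), but rather from associativity of multiplication in $R$ upstairs, combined with the lemma $(ab)\bmod f=a\cdot(b\bmod f)\bmod f$. All other steps are routine once this point is handled cleanly, since (1) and (2) are direct calculations and (4) reduces to the division-algebra property via (3).
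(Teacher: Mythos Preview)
Your proof is correct, and parts (1)--(3) follow essentially the same path as the paper's argument: the same semilinearity computation in (1), the same basis evaluation in (2), and the same inductive verification that $L_t^i(v)=t^iv\bmod f$ in (3), packaged via your lemma $(ab)\bmod f = a\cdot(b\bmod f)\bmod f$.

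For (4) you take a slightly different route from the paper. The paper argues by contradiction: given a proper $L_t$-invariant $K$-subspace $W$ of dimension $r<d$ and a nonzero $w\in W$, the $r+1$ vectors $w,L_tw,\ldots,L_t^rw$ are $K$-linearly dependent in $W$, producing a nonzero $a\in R$ of degree at most $r<d$ with $a\circ_f w=0$, contradicting the absence of zero divisors. You instead argue directly that any nonzero $L_t$-invariant $K$-subspace $W$ is closed under every $L_a$ (by (3)), and then use that $a\mapsto L_a(w)$ is an injective, hence surjective, map $V\to V$ whose image sits inside $W$, forcing $W=V$. Both arguments rest on exactly the same two ingredients---part (3) and the division-algebra property---and are essentially dual to one another; yours is a touch more direct, while the paper's version makes the bound $\deg a<d$ explicit.
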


\begin{proof}
(1) Clearly $L_t$ is linear, as multiplication is distributive. Let $v$ be any vector. If $tv = uf+w$ for some unique $u,w$, $\deg(w) < d$, then $L_t(v) = w$.
Let $\alpha$ be any non-zero element of $\Fqn$. Then
\[
L_t(\alpha v) = t(\alpha v) \mod f = (\alpha ^{\sigma}tv) \mod f = \alpha^{\sigma} (uf+w) \mod f
\]
\[
= (\alpha^{\sigma}uf+\alpha^{\sigma}w) \mod f = \alpha^{\sigma}w = \alpha^{\sigma}L_t(v).
\]

(2) The action of $L_t$ is as follows:
\[
L_t : 1 \mapsto t \mapsto t^2 \mapsto \ldots \mapsto t^{d-1} \mapsto (t^d \mod f) = \sum_{i = 0}^{d-1} f_i t^i
\]
and so $L_t(v) = A_f(v^{\sigma})$ as claimed.

(3) By definition, 
\[
L_a(b) = a \circ_f b = \left (\sum_{i = 0}^{d-1} a_i t^i \right )b \mod f = \sum_{i = 0}^{d-1} a_i (t^i b \mod f)
= \sum_{i = 0}^{d-1} a_i L_{t^i}(b)
\]
while
\[
a(L_t)(b) := \sum_{i=0}^{d-1} a_i L_t^i(b).
\]
Hence it suffices to show that $L_t^i(b) = L_{t^i}(b)$ for all $i$.
Suppose $L_t^i(b) = (t^i b) \mod f$ for some $i$. Let $L_t^i(b) = b'$. Then $t^i b = cf + b'$ for some $c$, and
\[
L_t^{i+1}(b) = L_t(L_t^i(b)) = L_t(b') = L_t(t^i b - cf)
\]
\[
= t(t^i b - cf) \mod f = (t^{i+1}b - tcf) \mod f = (t^{i+1}b) \mod f=L_{t^{i+1}}(b).
\]
Hence the result follows by induction.

(4) Let $W$ be a $L_t$-invariant subspace of $V$ such that $0 < r := {\mathrm{dim}}(W) < d$. Choose some non-zero $w \in W$. Then the set
\[
\{ w,L_t w,L_t ^2w, \ldots , L_t ^r w \} \subset W
\]
is linearly dependent. Hence there exist elenents $a_0,a_1 ,\ldots,a_d$ in $\Fqn$, not all zero, such that
\[
\sum_{i = 0}^{d-1} a_i (L_t^i w) = 0.
\]
Let $a = \sum_{i = 0}^{d-1} a_i t^i$. Then
\[
\left (\sum_{i = 0}^{d-1} a_i L_t^i\right ) w = a(L_t)w = 0.
\]
By part (3) of this theorem, $a(L_t) = L_a$, and so
\[
a \circ_f w = 0.
\]
But $a \circ_f w = 0$ implies $a = 0$ or $w = 0$, a contradiction. Hence $L_t$ is irreducible.
\end{proof}

\begin{corollary}
The spread set of endomorphisms of left multiplication of elements in $\SSS_f$ is
$\{ a(L_t) ~|~ a \in \SSS_f \}$.
\end{corollary}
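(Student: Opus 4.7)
The corollary is essentially a restatement of part (3) of Theorem \ref{thm:correspondence}. The plan is as follows.

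First, I would recall that the spread set of endomorphisms of left multiplication of $\SSS_f$ is by definition the collection $\{L_a : a \in \SSS_f\}$, where $L_a$ denotes the map $b \mapsto a \circ_f b$. Each $L_a$ is an $\Fq$-linear endomorphism of $V$ because multiplication in $\SSS_f$ distributes over addition and $\Fq$ is contained in the centre $Z(\SSS_f)$.

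Next, I would directly invoke part (3) of Theorem \ref{thm:correspondence}, which asserts that for $a = \sum_{i=0}^{d-1} a_i t^i \in \SSS_f$, one has
\[
L_a \;=\; a(L_t) \;=\; \sum_{i=0}^{d-1} a_i L_t^i.
\]
This identifies the endomorphism $L_a$ with the expression $a(L_t)$ obtained by evaluating the polynomial $a$ at $L_t$. Taking the union over all $a \in \SSS_f$ yields
\[
\{L_a : a \in \SSS_f\} \;=\; \{a(L_t) : a \in \SSS_f\},
\]
which is exactly the claim. No further work is needed; there is no real obstacle, since the content of the corollary lies entirely in the computation already performed in the proof of Theorem \ref{thm:correspondence}(3).
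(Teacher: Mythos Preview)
Your proposal is correct and matches the paper's approach: the corollary is stated without proof in the paper, being an immediate consequence of part (3) of Theorem \ref{thm:correspondence}, exactly as you describe.
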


In \cite{JHJO1989} Jha and Johnson defined a semifield as follows.
\begin{theorem}
Let $T$ be an irreducible element of ${\mathrm{\Gamma L}}(d,K)$. Fix a $K$-basis $\{e_0,e_1, \ldots ,e_{d-1}\}$ of $V$. Define a multiplication on $V$ by
\[
a \circ b = a(T)b = \sum_{i=0}^{d-1} a_i T^i(b)
\]
where $a = \sum_{i=0}^{d-1} a_i e_i$. Then $\SSS_T = (V,\circ)$ defines a semifield.
\end{theorem}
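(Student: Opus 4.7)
The plan is to verify the semifield axioms (S1)--(S4) in turn. (S1) holds because $V$ is already an abelian group under $+$, and (S2) is a direct check: the operator $a(T) = \sum_i a_i T^i$ is additive (as a sum of additive maps), so $a \circ (b+c) = a\circ b + a \circ c$, and the map $a \mapsto a(T)$ is itself additive, so $(a+b)\circ c = a \circ c + b \circ c$.

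The essential point is (S3), the absence of zero divisors. I would promote $V$ to a left module over the skew polynomial ring $R = K[t;\sigma]$ by setting $t \cdot v := T(v)$ and $\alpha \cdot v := \alpha v$ for $\alpha \in K$. The semilinearity relation $T(\alpha v) = \alpha^{\sigma}T(v)$ is exactly the defining relation $t\alpha = \alpha^{\sigma}t$ of $R$, so this module structure is well-defined. Irreducibility of $T$ is then precisely the statement that $V$ is a \emph{simple} left $R$-module. For any $0 \ne b \in V$, simplicity gives $R \cdot b = V$, so $V \cong R/\mathrm{Ann}_R(b)$ as $R$-modules. Since $R$ is a left principal ideal domain (Theorem \ref{Oreprop}(4)), one has $\mathrm{Ann}_R(b) = R g_b$ for some $g_b \in R$, and by right Euclidean division the quotient $R/Rg_b$ has $K$-basis $\{1,t,\ldots,t^{\deg(g_b)-1}\}$; comparing with $\dim_K V = d$ forces $\deg(g_b) = d$. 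Now if $a \circ b = 0$, then the element $\sum_i a_i t^i \in R$, which has degree strictly less than $d$, lies in $Rg_b$ and must therefore be zero, giving $a = 0$.

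For (S4), I would use irreducibility once more to produce a cyclic basis. For any $0 \ne v \in V$ the chain $\langle v \rangle \subseteq \langle v, Tv \rangle \subseteq \cdots$ of $K$-spans must strictly grow until it fills $V$, because the first point at which it stops growing it becomes $T$-stable, hence all of $V$ by irreducibility. Thus $\{v, Tv, \ldots, T^{d-1}v\}$ is a basis, and relabelling $e_0 := v$, $e_i := T^i e_0$, the element $e_0$ serves as a two-sided identity: $e_0 \circ b = T^0(b) = b$, and conversely $b \circ e_0 = \sum_i b_i T^i(e_0) = \sum_i b_i e_i = b$.

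The main obstacle is (S3). A direct attack on the kernel of the operator $a(T)$ runs aground because $a(T)$ is only additive, not $K$-linear (each $T^i$ carries the automorphism $\sigma^i$), so its kernel is not a $K$-subspace and need not be $T$-stable. Recasting the problem as module theory over $R$, where irreducibility of $T$ becomes simplicity of $V$ and divisibility is controlled by the PID structure, sidesteps this obstruction and reduces the non-vanishing of $a \circ b$ to a degree comparison.
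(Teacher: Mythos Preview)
Your argument is correct. The paper itself does not supply a proof of this statement; it is quoted as a result of Jha--Johnson. That said, the paper's surrounding machinery (Theorems~\ref{thm:correspondence}--\ref{thm:correspondence3}) furnishes an alternative route: every irreducible $T$ is shown to be $\mathrm{GL}(d,K)$-conjugate to some $L_{t,f}$ with $f \in R$ irreducible, whence $\SSS_T$ is isotopic to $\SSS_{L_{t,f}} = \SSS_f$, already known to be a semifield. Your proof is the intrinsic, module-theoretic version of exactly this idea: your annihilator generator $g_b$ plays the role of the paper's $f$, and the isomorphism $V \cong R/Rg_b$ is the coordinate-free form of the conjugation constructed in Theorem~\ref{thm:correspondence3}. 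The gain in your packaging is that (S3) falls out of a one-line degree comparison in $R$, without naming $\SSS_f$ or writing down an explicit change of basis.

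One small point on (S4): as the theorem is literally stated, with an \emph{arbitrary} fixed basis $\{e_0,\ldots,e_{d-1}\}$, the element $e_0$ is always a left identity but need not be a right identity, so $\SSS_T$ is in general only a pre-semifield. Your relabelling to a cyclic basis $e_i = T^i e_0$ is precisely the standard fix, and is also what the paper does implicitly in the proof of Theorem~\ref{thm:correspondence3}; it produces a genuine semifield isotopic to the pre-semifield obtained from any other choice of basis.
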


The following theorem is an immediate consequence of the definition of $\SSS_T$ and Theorem \ref{thm:correspondence}.

\begin{theorem}\label{thm:correspondence2}
If $f$ is irreducible in $R$, and $L_{t,f}$  denotes the semilinear transformation $v \mapsto tv \mod f$, then
$\SSS_f = \SSS_{L_{t,f}}$.
\end{theorem}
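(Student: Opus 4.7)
The plan is to observe that this is essentially a direct restatement of part (3) of Theorem \ref{thm:correspondence} together with the matching of underlying data on both sides. First I would identify the underlying vector spaces: in both constructions the ambient space is $V=\{a\in R:\deg(a)<d\}$, regarded as a $d$-dimensional $K$-vector space with the basis $\{1,t,t^2,\ldots,t^{d-1}\}$. With this identification, writing $a=\sum_{i=0}^{d-1}a_i t^i \in \SSS_f$ corresponds exactly to writing $a=\sum_{i=0}^{d-1}a_i e_i$ in the Jha--Johnson construction of $\SSS_T$ when $T=L_{t,f}$, so the two semifields share the same underlying additive structure over $K$.

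Next I would compare the two multiplications. By definition, the product in $\SSS_f$ is
\[
a\circ_f b = ab \bmod f = L_a(b),
\]
where $L_a$ is left multiplication by $a$ in $\SSS_f$. Part (3) of Theorem \ref{thm:correspondence} identifies this operator as
\[
L_a = a(L_t) = \sum_{i=0}^{d-1} a_i L_t^{\,i},
\]
where $L_t$ is precisely the semilinear map $L_{t,f}\colon v\mapsto tv \bmod f$ of the present statement. On the other hand, the Jha--Johnson semifield $\SSS_{L_{t,f}}$ is defined by
\[
a\circ b = a(L_{t,f})b = \sum_{i=0}^{d-1} a_i L_{t,f}^{\,i}(b).
\]
Comparing these two expressions shows $a\circ_f b=a\circ b$ for all $a,b\in V$, so the semifields agree as multiplicative structures on the same underlying space, i.e. $\SSS_f=\SSS_{L_{t,f}}$.

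There is essentially no obstacle here beyond bookkeeping: the only thing that requires care is making the basis identification $\{1,t,\ldots,t^{d-1}\}\leftrightarrow\{e_0,e_1,\ldots,e_{d-1}\}$ explicit, so that the coefficient vectors $(a_0,\ldots,a_{d-1})$ used to expand $a$ as $\sum a_i t^i$ and as $\sum a_i e_i$ are literally the same. Once that is done, Theorem \ref{thm:correspondence}(3) immediately forces the equality of the two multiplication rules, and no further verification is needed.
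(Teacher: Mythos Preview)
Your proposal is correct and is exactly the approach the paper takes: the paper states that the theorem is an immediate consequence of the definition of $\SSS_T$ and Theorem \ref{thm:correspondence}, which is precisely your argument via the basis identification $\{1,t,\ldots,t^{d-1}\}\leftrightarrow\{e_0,\ldots,e_{d-1}\}$ and part (3) of that theorem.
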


Kantor and Liebler noted that conjugate semilinear transformations define isotopic semifields:
\begin{lemma}
Suppose $T = \phi^{-1} U \phi$ for some $\phi \in {\mathrm{\Gamma L}}(d,K)$, and let $\rho \in Aut(K)$ be the accompanying automorphism of $\phi$. Let $\SSS_T = (V,\circ)$, $\SSS_U = (V,\star)$. Then
\[
\phi(a \circ b) = a^{\rho} \star \phi(b).
\]
\end{lemma}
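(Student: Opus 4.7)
The plan is to prove the identity by a direct expansion of both sides using the definition of the cyclic semifield multiplication from Jha--Johnson, together with the semilinearity of $\phi$ and the conjugation relation $T = \phi^{-1}U\phi$. There is no deep obstacle; the lemma is essentially a bookkeeping identity, but one has to be careful about the precise meaning of $a^\rho$ (the coordinate-wise application of $\rho$ to the coefficients of $a$ in the fixed $K$-basis $\{e_0,\ldots,e_{d-1}\}$).

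First I would write $a = \sum_{i=0}^{d-1} a_i e_i$ with $a_i \in K$, so that by definition $a \circ b = \sum_{i=0}^{d-1} a_i T^i(b)$. Applying $\phi$ and using that $\phi$ is additive and semilinear with accompanying automorphism $\rho$, I get
\[
\phi(a \circ b) = \phi\!\left(\sum_{i=0}^{d-1} a_i T^i(b)\right) = \sum_{i=0}^{d-1} a_i^{\rho}\, \phi(T^i(b)).
\]
Next I would use the conjugation relation: from $T = \phi^{-1} U \phi$ one gets $T^i = \phi^{-1} U^i \phi$ by induction, and hence $\phi \circ T^i = U^i \circ \phi$ as maps on $V$. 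Substituting this in yields
\[
\phi(a \circ b) = \sum_{i=0}^{d-1} a_i^{\rho}\, U^i(\phi(b)).
\]

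Finally I would recognise the right-hand side as $a^{\rho} \star \phi(b)$: since $a^{\rho} = \sum_{i=0}^{d-1} a_i^{\rho} e_i$ by definition, the Jha--Johnson multiplication $\star$ associated with $U$ gives $a^{\rho} \star \phi(b) = \sum_{i=0}^{d-1} a_i^{\rho} U^i(\phi(b))$, which matches what we computed. This completes the identification. The only point that could trip one up is confirming that the convention ``$a^\rho$ means apply $\rho$ to each coordinate in the chosen basis'' is indeed the one compatible with the accompanying automorphism of $\phi$, but this is exactly the standard description of semilinear maps on $K^d$ noted just after the definition of ${\mathrm{\Gamma L}}(d,K)$.
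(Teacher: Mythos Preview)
Your argument is correct. Note, however, that the paper does not actually supply a proof of this lemma: it is stated as an observation attributed to Kantor and Liebler and left unproved. Your direct verification---expand $a\circ b=\sum_i a_i T^i(b)$, apply the semilinearity of $\phi$ to pull the coefficients through as $a_i^{\rho}$, use $\phi T^i=U^i\phi$ from the conjugation relation, and then recognise the result as $a^{\rho}\star\phi(b)$---is exactly the natural proof and is presumably what the authors (and Kantor--Liebler) had in mind. Your remark about the meaning of $a^{\rho}$ as coordinate-wise application of $\rho$ in the chosen basis is the one genuine point to check, and you have handled it correctly.
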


We will show that each semifield defined by a semilinear transformation is isotopic to some semifield $\SSS_f$ with
$f$ irreducible in some skew polynomial ring.

\begin{theorem}\label{thm:correspondence3}
Let $T$ be any irreducible element of ${\mathrm{\Gamma L}}(d,K)$ with automorphism $\sigma$. Then $T$ is ${\mathrm{GL}}(d,K)$-conjugate to $L_{t,f}$ for some irreducible $f \in R = K[t;\sigma]$, and hence $\SSS_T$ is isotopic to $\SSS_f$.
\end{theorem}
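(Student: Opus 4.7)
The plan is to produce a cyclic vector for $T$, read off a polynomial $f \in R = K[t;\sigma]$ that encodes its action, verify that $f$ is irreducible, and then realize the conjugating element of $\mathrm{GL}(d,K)$ as the corresponding change of basis. For the first step, pick any nonzero $v \in V$. Since $T$ is $\sigma$-semilinear, the $K$-span of $\{v, Tv, T^2 v, \ldots\}$ is a nonzero $T$-invariant $K$-subspace, hence equals $V$ by irreducibility, and a dimension count forces $\{v, Tv, \ldots, T^{d-1} v\}$ to be a $K$-basis. Writing $T^d v = \sum_{i=0}^{d-1} f_i T^i v$ with uniquely determined $f_i \in K$, I set $f := t^d - \sum_{i=0}^{d-1} f_i t^i \in R$.

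The main step is verifying that $f$ is irreducible, and I expect this to be the chief obstacle. The relation $T\alpha = \alpha^\sigma T$ on additive endomorphisms of $V$ is precisely the defining relation $t\alpha = \alpha^\sigma t$ of $R$, so the map $R \to \mathrm{End}_{+}(V)$ sending $t \mapsto T$ and fixing scalars is a ring homomorphism; in particular $f(T) v = 0$ by construction. Suppose for contradiction that $f = gh$ with $1 \le \deg g, \deg h < d$. Then $g(T)\, h(T)\, v = 0$, and since $h$ is a nonzero element of $R$ of degree less than $d$ while $\{v, Tv, \ldots, T^{d-1} v\}$ is a basis, the vector $w := h(T) v$ is nonzero. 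The relation $g(T) w = 0$ expresses $T^{\deg g} w$ as a $K$-combination of $w, Tw, \ldots, T^{\deg g - 1} w$, so $\mathrm{span}_K\{T^i w : i \ge 0\}$ is a nonzero proper $T$-invariant $K$-subspace of dimension at most $\deg g < d$, contradicting the irreducibility of $T$.

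For the final step, let $\phi \in \mathrm{GL}(d,K)$ be the $K$-linear change of basis carrying the standard basis of $V$ to $(v, Tv, \ldots, T^{d-1} v)$. By construction, $T$ acts in the new basis with the companion-type matrix $A_f$ of Theorem \ref{thm:correspondence}(2) and accompanying automorphism $\sigma$, so $T = \phi\, L_{t,f}\, \phi^{-1}$. Applying the preceding Kantor--Liebler lemma with this $K$-linear $\phi$ (trivial accompanying automorphism) yields an isotopy $\SSS_T \sim \SSS_{L_{t,f}} = \SSS_f$, the identification on the right being Theorem \ref{thm:correspondence2}. The remaining verifications (that $\phi$ is invertible and that $\SSS_T$ does not depend on the choice of $v$ up to isotopy) are routine.
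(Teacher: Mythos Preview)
Your proof is correct and follows essentially the same cyclic-vector/companion-form strategy as the paper: pick a nonzero $v$, use the basis $\{v,Tv,\ldots,T^{d-1}v\}$, read off $f$ from $T^d v$, and take $\phi$ to be the change-of-basis map. The only substantive difference is in the handling of the irreducibility of $f$: the paper simply asserts that $\{v,Tv,\ldots,T^{d-1}v\}$ is a basis and then appeals to Theorem~\ref{thm:correspondence}(4) for the equivalence ``$L_{t,f}$ irreducible $\Leftrightarrow$ $f$ irreducible'' (even though that theorem, as stated, only proves the forward direction), whereas you supply both of these points directly via the ring homomorphism $R\to\mathrm{End}_+(V)$, $t\mapsto T$, and the observation that a factorisation $f=gh$ would produce the proper $T$-invariant subspace $\mathrm{span}_K\{T^i(h(T)v):i\ge 0\}$. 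Your route is thus a little more self-contained, but not genuinely different.
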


\begin{proof}

Identify $V$ with the set of polynomials of degree $\leq d-1$ in $R$ and choose some non-zero element $v\in V$. Consider the basis
\[
\{ v,Tv,T^2 v , \ldots , T^{d-1}v \},
\]
and define a transformation $\phi \in GL(d,K)$ by
\[
\phi(t^i) := T^i v,
\]
for $i = 0,1,\ldots,d-1$. Then there exist $f_i \in K$ such that
\[
T^d v = \sum_{i=0}^{d-1} f_i T^i v.
\]
It is left to the reader to verify that
\[
T \phi = \phi L_{t,f},
\]
where
\[
f = t^d - \sum_{i=0}^{d-1} f_i t^i \in R.
\]
As shown in Theorem \ref{thm:correspondence}, part (4), $L_{t,f}$ is irreducible if and only if $f$ is irreducible in $R$.
\end{proof}

\begin{theorem}\label{thm:correspondence4}
Let $f$ and $g$ be two monic irreducibles of degree $d$ in $R$. Then
\begin{enumerate}
	\item
		$L_{t,f}$ and $L_{t,g}$ are $GL(d,K)$-conjugate if and only if $f$ and $g$ are similar;
	\item
		$L_{t,f}$ and $L_{t,g}$ are ${\mathrm{\Gamma L}}(d,K)$-conjugate if and only if $f$ and $g^{\rho}$ are similar for some $\rho \in Aut(K)$.
\end{enumerate}
\end{theorem}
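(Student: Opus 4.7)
The plan is to translate everything into the language of left modules over $R=K[t;\sigma]$. Under the identification $\SSS_f = R/Rf$ as a left $R$-module, the left action of $t$ is precisely the semilinear transformation $L_{t,f}$. Since $t$ together with $K$ generates $R$ as a ring, and left multiplication by any $a = \sum a_i t^i \in R$ on $R/Rf$ equals $\sum a_i L_{t,f}^i$ by the argument of Theorem \ref{thm:correspondence}(3), a $K$-linear map $\phi: R/Rg \to R/Rf$ intertwines $L_{t,g}$ with $L_{t,f}$ if and only if it is a left $R$-module homomorphism. Moreover, because $f$ and $g$ are irreducible in the principal left ideal domain $R$, both $R/Rf$ and $R/Rg$ are simple left $R$-modules, so by Schur's lemma any nonzero $R$-module map between them is an isomorphism.

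For part (1), this dictionary settles both directions. If $f$ and $g$ are similar, pick $0\ne u$ with $\deg u<d$ and $gu\equiv 0 \mod f$; then $b+Rg\mapsto bu+Rf$ is a well-defined, nonzero, left $R$-linear map, hence by Schur an element of ${\mathrm{GL}}(d,K)$ conjugating $L_{t,g}$ to $L_{t,f}$. Conversely, given $\phi\in {\mathrm{GL}}(d,K)$ with $\phi L_{t,g}=L_{t,f}\phi$, $\phi$ is a left $R$-module homomorphism, so setting $u+Rf := \phi(1+Rg)$ one has $\phi(a+Rg)=au+Rf$ for every $a\in R$. Applying this to $a=g$, which represents $0$ in $R/Rg$, yields $gu+Rf=0$, i.e.\ $gu\equiv 0 \mod f$; and $u\ne 0$ because $\phi$ is invertible, so $f$ and $g$ are similar.

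For part (2), the extra ingredient is the ring automorphism $\Phi_\rho$ of $R$ fixing $t$ and acting as $\rho$ on $K$, well-defined because $Aut(K)$ is abelian and so $\rho$ and $\sigma$ commute. Viewed as a $\rho$-semilinear map on polynomials of degree less than $d$, a direct computation gives $\Phi_\rho L_{t,f}\Phi_\rho^{-1}=L_{t,f^\rho}$. Now if $\phi\in {\mathrm{\Gamma L}}(d,K)$ has companion automorphism $\rho$ and conjugates $L_{t,f}$ to $L_{t,g}$, then $\psi:=\phi\Phi_\rho^{-1}$ has trivial companion, hence lies in ${\mathrm{GL}}(d,K)$, and satisfies $\psi L_{t,f^\rho}\psi^{-1}=L_{t,g}$; by part (1), $f^\rho$ and $g$ are similar, and applying $\Phi_{\rho^{-1}}$ to this similarity relation yields that $f$ and $g^{\rho^{-1}}$ are similar, which is the desired conclusion (after replacing $\rho$ by $\rho^{-1}$). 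The converse is symmetric: similarity of $f$ and $g^\rho$ gives, by $\Phi_{\rho^{-1}}$, similarity of $f^{\rho^{-1}}$ and $g$; part (1) supplies $\psi\in{\mathrm{GL}}(d,K)$ conjugating $L_{t,f^{\rho^{-1}}}$ to $L_{t,g}$, and then $\phi:=\psi\Phi_{\rho^{-1}}\in{\mathrm{\Gamma L}}(d,K)$ conjugates $L_{t,f}$ to $L_{t,g}$. The main obstacle is essentially bookkeeping in this last paragraph: placing $\Phi_\rho$ versus $\Phi_\rho^{-1}$ correctly in the composition, verifying that $\psi$ has trivial companion automorphism, and keeping straight the two equivalent formulations ``$f^\rho$ and $g$ similar'' and ``$f$ and $g^\rho$ similar''.
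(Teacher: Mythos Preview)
Your proof is correct and somewhat more conceptual than the paper's. The paper argues by direct computation: assuming $L_{t,f}\phi=\phi L_{t,g}$ with companion automorphism $\rho$, it sets $u=\phi(1)$, shows inductively that $\phi(t^i)=t^i u\bmod f$, and then evaluates the intertwining relation at $t^{d-1}$ to extract $g^{\rho}u\equiv 0\bmod f$. Only this direction is written out; the converse is left implicit (it follows from Theorem~\ref{thm:similar}, whose isotopism $H$ is $K$-linear and satisfies $HL_{t,g}=L_{t,f}H$). You instead recognise that for a $K$-linear $\phi$ the intertwining condition is precisely the condition that $\phi$ be a left $R$-module homomorphism $R/Rg\to R/Rf$, after which Schur's lemma and the standard description of module maps between cyclic modules give both directions of (1) at once. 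For (2) the paper absorbs the twist by $\rho$ into the same calculation, while you factor it off via the ring automorphism $\Phi_\rho$ (well-defined because $Aut(K)$ is abelian) and reduce cleanly to part (1). Both approaches are valid; yours is more symmetric and handles the converse explicitly, while the paper's is a self-contained hands-on verification.
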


\begin{proof}
Suppose $L_{t,f} \phi = \phi L_{t,g}$ for some $\phi \in {\mathrm{\Gamma L}}(d,K)$, where $\phi$ has automorphism $\rho$. Let $\phi(1) = u$. Then
\[
\phi(t^i) = \phi (L_{t,g}^i (1)) = L_{t,f}^i \phi (1)
= L_{t,f}^i (u) = t^i u \mod f
\]
for all $i = 0,1,\ldots,d-1$. Now, with $g=t^d-\sum_{i=0}^{d-1} g_i t^i$, we have
\[
\phi L_{t,g} (t^{d-1}) = \phi(t^d \mod g) 
= \phi\left (\sum_{i=0}^{d-1} g_i t^i\right ) = \sum_{i=0}^{d-1} g_i^\rho \phi(t^i)
\]
\[
=\sum_{i=0}^{d-1} g_i^\rho (t^i u \mod f) = \left (\sum_{i=0}^{d-1} g_i^\rho t^i\right ) u \mod f 
= (t^d - g^{\rho})u \mod f.
\]
But as $L_{t,f} \phi = \phi L_{t,g}$, this is equal to
\[
L_{t,f} \phi(t^{d-1}) = L_{t,f} (t^{d-1}u \mod f).
\]
Let $t^{d-1}u = af + b$ where $\deg(b)<d$. Then
\[
L_{t,f} (t^{d-1}u \mod f) = L_{t,f}(t^{d-1}u - af)
\] 
\[
= (t^d u - taf) \mod f = t^d u \mod f.
\]
Hence
\[
(t^d - g^{\rho})u \equiv t^d u \mod f
\]
and so
\[
g^{\rho}u = 0 \mod f
\]
i.e. $f$ and $g^{\rho}$ are similar. If $\phi \in GL(n,K)$, then $\rho$ is the identity automorphism, and so $f$ and $g$ are similar.
\end{proof}

This provides an alternate proof of the following result proved by Dempwolff (\cite{Dempwolffprep}, Theorem 2.10), (Compare to Asano-Nakayama \cite{Asano}, Satz 13).
\begin{corollary}
Let $T$ and $U$ be two irreducible elements of ${\mathrm{\Gamma L}}(d,K)$, $K=\Fqn$, where the accompanying automorphism $\sigma$ of both $T$ and $U$ is a generator of $Gal(K,\Fq)$. Then
\begin{enumerate}
	\item
		$T$ and $U$ are ${\mathrm{GL}}(d,K)$-conjugate if and only if $T^n$ and $U^n$ have the same minimal polynomial over $\Fq$;
	\item
		$T$ and $U$ are ${\mathrm{\Gamma L}}(d,K)$-conjugate if and only if the minimal polynomials of $T^n$ and $U^n$ over $\Fq$ are $Aut(\Fq)$ conjugate.
\end{enumerate}
\end{corollary}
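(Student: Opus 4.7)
The plan is to reduce the question to Theorem \ref{thm:correspondence4} via $GL$-conjugation of $T$ and $U$ to standard representatives $L_{t,f}$, $L_{t,g}$, and then to translate similarity of skew polynomials into equality (respectively $Aut(\Fq)$-conjugacy) of their minimal central left multiples using Theorem \ref{thm:equivalence1}. The remaining substantive task will be to identify, for a given irreducible $f \in R = K[t;\sigma]$, the polynomial $\hat{f} \in \Fq[y]$ determined by $mzlm(f) = \hat{f}(t^n)$ as the minimal polynomial of $L_{t,f}^n$ over $\Fq$.

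First I would apply Theorem \ref{thm:correspondence3} to realise $T$ and $U$ as $GL(d,K)$-conjugates of $L_{t,f}$ and $L_{t,g}$ respectively, for some monic irreducibles $f,g \in R$ of degree $d$; this reduction preserves $GL$- and $\Gamma L$-conjugacy classes as well as the minimal polynomial of $T^n$ over $\Fq$, so we may assume $T = L_{t,f}$ and $U = L_{t,g}$. Using part (3) of Theorem \ref{thm:correspondence} inductively, $T^n = L_{t,f}^n = L_{t^n,f}$, which is in fact $K$-linear because $t^n$ is central in $R$. For any $p \in \Fq[y]$ the same formula yields $p(T^n) = L_{p(t^n),f}$, and this vanishes if and only if $f \mid p(t^n)$ in $R$; since $p(t^n)$ is central, this is in turn equivalent to $\hat{f}(y) \mid p(y)$ in $\Fq[y]$. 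As $\hat{f}$ is already monic irreducible of degree $d$ in $\Fq[y]$ (by the Giesbrecht lemma stated before Lemma \ref{thm:fullmatrix}), it must be the minimal polynomial of $T^n$ over $\Fq$, and likewise $\hat{g}$ is that of $U^n$.

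Part (1) then follows by chaining equivalences: $T$ and $U$ are $GL(d,K)$-conjugate if and only if $f$ and $g$ are similar (Theorem \ref{thm:correspondence4}(1)), if and only if $mzlm(f) = mzlm(g)$ (Theorem \ref{thm:equivalence1}), if and only if $\hat{f} = \hat{g}$. For part (2), the analogous chain based on Theorem \ref{thm:correspondence4}(2) will need the observation that, since $Aut(K)$ is cyclic and hence every $\rho \in Aut(K)$ commutes with $\sigma$, the coefficient-wise map $a \mapsto a^{\rho}$ defines a ring automorphism of $R$ fixing $t$; consequently $mzlm(g^{\rho}) = \hat{g}^{\rho}(t^n)$, where $\hat{g}^{\rho}$ is obtained by applying $\rho|_{\Fq}$ to the coefficients of $\hat{g}$. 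Because the restriction map $Aut(K) \to Aut(\Fq)$ is surjective, the $Aut(\Fq)$-conjugacy of $\hat{f}$ and $\hat{g}$ drops out. The main obstacle I anticipate is precisely the clean identification of $\hat{f}$ with the minimal polynomial of $T^n$ over $\Fq$, together with the Galois bookkeeping needed to track the action of $\rho$ in part (2).
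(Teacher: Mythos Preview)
Your proposal is correct and follows essentially the same route as the paper's proof: reduce via Theorem~\ref{thm:correspondence3} to $L_{t,f}$ and $L_{t,g}$, identify $\hat{f}$ as the minimal polynomial of $L_{t,f}^n$ over $\Fq$ by using centrality of $t^n$ and the defining property of $mzlm(f)$, and then invoke Theorems~\ref{thm:correspondence4} and~\ref{thm:equivalence1}. Your treatment of part~(2) is in fact more explicit than the paper's, which simply asserts the analogue without spelling out the $mzlm(g^{\rho}) = \hat{g}^{\rho}(t^n)$ step or the surjectivity of the restriction $Aut(K)\to Aut(\Fq)$.
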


\begin{proof}

(1) By Theorem \ref{thm:correspondence3}, we may assume $T$ is $GL(d,K)$-conjugate to $L_{t,f}$, $U$ is $GL(d,K)$-conjugate to $L_{t,g}$, for some $f,g\in R$ irreducibles of degree $d$. Let $mzlm(f)=\hat{f}(t^n)$ for $\hat{f} \in \Fq[y]$, and suppose $\hat{f}(t^n) = af$. As $\sigma$ has order $n$, $T^n$ and $U^n$ are $\Fqn$-linear. We claim that $\hat{f}$ is the minimal polynomial of $L_{t,f}^n$ over $\Fq$, and hence the minimal polynomial of $T^n$ over $\Fq$. For any $v$,
\[
\hat{f}(L_{t,f}^n)v = \hat{f}(t^n) v \mod f
\]
\[
= v \hat{f}(t^n) \mod f = vaf \mod f = 0.
\]
Hence $\hat{f}(L_{t,f}^n) = 0$. Suppose now $\hat{h}(L_{t,f}^n) = 0$ for some $\hat{h} \in \Fq[y]$. Then
\[
\hat{h}(L_{t,f}^n)(1) = \hat{h}(t^n) \mod f = 0.
\]
But then $f$ divides $\hat{h}(t^n)$, and $\hat{h}(t^n)$ is in the centre of $R$, so $\hat{f}$ divides $\hat{h}$. Therefore $\hat{f}$ is the minimal polynomial of $L_{t,f}^n$ (and hence $T^n$) over $\Fq$ as claimed.

Similarly, if $mzlm(g)=\hat{g}(t^n)$, then $\hat{g}$ is the minimal polynomial of $L_{t,g}^n$, and $U^n$, over $\Fq$.

By Theorem \ref{thm:equivalence1} and Theorem \ref{thm:correspondence4}, $L_{t,f}$ and $L_{t,g}$ are $GL(d,K)$-conjugate if and only if $mzlm(f) = mzlm(g)$. Hence $T$ and $U$ are $GL(d,K)$-conjugate if and only if $T^n$ and $U^n$ have the same minimal polynomial over $\Fq$.

(2) Similarly, $T$ and $U$ are ${\mathrm{\Gamma L}}(d,K)$ conjugate if and only if $\hat{f}=\hat{g}^{\rho}$ for some $\rho \in Aut(\Fq)$, i.e. if and only if the minimal polynomials $T^n$ and $U^n$ over $\Fq$ are $Aut(\Fq)$ conjugate.
\end{proof}

As we know that these minimal polynomials are irreducible and have degree $d$ in $\Fq[y]$, this result of Dempwolff implies an upper bound on the number of conjugacy classes, and hence the number of isotopy classes:
\[
A(q,n,d) \leq N(q,d) = \#I(q,d).
\]

\section{Isotopisms between different skew-polynomial rings}
In this section, we consider the isotopism problem for semifields constructed from different skew polynomial rings.
The most general skew polynomial ring has the form $K[t;\sigma,\delta]$, with multiplication
defined by
$$
ta=a^\sigma t + a^\delta,
$$
where $\sigma$ is an automorphism of $K$ and $\delta$ is a $\sigma$-derivation, i.e. an additive map on $K$ such that
\[
(ab)^{\delta} = a^{\sigma}b^{\delta} + a^{\delta} b
\]
for all $a,b \in K$. For example, for any $x \in K$ the map
\[
\delta_x : a \mapsto x(a-a^{\sigma})
\]
is a $\sigma$-derivation. It is easily verified that for a finite field, every $\sigma$-derivation is of this form. Petit \cite{Petit} showed that these rings can also be used to define semifields with the same nuclei as those obtained from $K[t;\sigma]$.
However, as the following theorem of Jacobson shows, $K[t;\sigma,\delta_x]$ is isomorphic to $K[t;\sigma]$ for all $x$, and hence the semifields obtained are isotopic.

\begin{theorem}[Jacobson \cite{JacobsonBook} Prop 1.2.20:]
Let $R = K[t;\sigma]$ and $R' = K[t;\sigma,\delta_x]$ be skew-polynomial rings. Denote the multiplication in $R$ and $R'$ by $\circ$ and $\circ'$ respectively. Define a map $\phi:R \rightarrow R'$ by
\[
a(t) \mapsto a(t-x),
\]
where the evaluation of $f(t-x)$ occurs in $R'$ (i.e. $\phi(t^2) = (t-x) \circ' (t-x)$). Then the map $\phi$ is linear and
\[
\phi(a \circ b) = \phi(a) \circ' \phi(b)
\]
for all $a,b \in R$
\end{theorem}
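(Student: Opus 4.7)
The plan is to exploit the universal property of $R = K[t;\sigma]$: as a $K$-ring it is generated by the single element $t$ subject only to the commutation relations $t \circ a = a^{\sigma} \circ t$ for $a \in K$. To produce a ring homomorphism $\phi: R \to R'$ with $\phi|_K = \mathrm{id}$ and $\phi(t) = t - x$, it therefore suffices to verify that the element $t - x \in R'$ satisfies the same commutation rule with respect to $\circ'$.

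First I would establish linearity and bijectivity in one pass. Both $R$ and $R'$ are free left $K$-modules on the basis $\{t^i\}_{i \ge 0}$ (unique left-coefficient representation, via the right-Euclidean structure). The definition $\phi(\sum a_i t^i) = \sum a_i (t-x)^{\circ' i}$ is manifestly $K$-linear, and since each $(t-x)^{\circ' i}$ has leading term $t^i$ in $R'$, the images form another $K$-basis of $R'$; hence $\phi$ is a $K$-linear bijection.

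The heart of the argument is a direct calculation in $R'$. Using $t \circ' a = a^{\sigma} t + a^{\delta_x} = a^{\sigma} t + x(a - a^{\sigma})$ together with commutativity of $K$, I compute
\[
(t - x) \circ' a \;=\; a^{\sigma} t + xa - xa^{\sigma} - xa \;=\; a^{\sigma} t - xa^{\sigma} \;=\; a^{\sigma} \circ' (t - x),
\]
which is exactly the $\phi$-image of the defining relation $t \circ a = a^{\sigma} \circ t$ in $R$. Conceptually, the $\sigma$-derivation $\delta_x$ is absorbed by the translation $t \mapsto t - x$.

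From this single identity the multiplicativity $\phi(a \circ b) = \phi(a) \circ' \phi(b)$ follows by induction on $\deg a$. Iterating the key commutation gives $(t-x)^{\circ' n} \circ' b = b^{\sigma^n} \circ' (t-x)^{\circ' n}$, i.e., $\phi(t^n) \circ' \phi(b) = \phi(b^{\sigma^n} t^n) = \phi(t^n \circ b)$; bilinearity then extends this to arbitrary $a, b \in R$. I expect the only real obstacle to be notational bookkeeping: consistently distinguishing the two multiplications $\circ$ and $\circ'$, and always evaluating powers of $t - x$ inside $R'$ (where they are genuinely nontrivial because of $\delta_x$). Once the displayed commutation is in hand, the rest is a routine universal-property argument.
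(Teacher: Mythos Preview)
Your proposal is correct and follows essentially the same approach as the paper: both reduce multiplicativity to the single commutation check $(t-x)\circ' \alpha = \alpha^{\sigma}\circ'(t-x)$ for $\alpha\in K$, and both carry out that same calculation. The only differences are cosmetic---you phrase the reduction via the universal property of $K[t;\sigma]$ and add a remark on bijectivity, whereas the paper simply notes that $\phi(t^i\circ t^j)=\phi(t^i)\circ'\phi(t^j)$ and $\phi(\alpha\circ\beta t^i)=\phi(\alpha)\circ'\phi(\beta t^i)$ hold trivially, leaving $\phi(t\circ\alpha)=\phi(t)\circ'\phi(\alpha)$ as the one nontrivial case. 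One small point of notation: in your line ``$(t-x)^{\circ' n}\circ' b = b^{\sigma^n}\circ'(t-x)^{\circ' n}$'' the symbol $b$ should be an element of $K$, not of $R$; the subsequent appeal to bilinearity shows you intend this, but it is worth saying explicitly.
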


\begin{proof}
Clearly by the definition of $\phi$, $\phi(t^i \circ t^j) = \phi(t^i) \circ' \phi(t^j)$ for all $i,j$, and $\phi(\alpha \circ \beta t^i) = \phi(\alpha) \circ' \phi(\beta t^i)$ for all $\alpha,\beta \in K$ and all $i$. Hence it suffices to show that 
\[
\phi(t \circ \alpha) = \phi(t) \circ' \phi(\alpha)
\]
for all $\alpha \in K$. Now
\[
\phi(t \circ \alpha) = \phi(\alpha^{\sigma} t) = \phi(\alpha^{\sigma}) \circ' \phi(t)
= \alpha^{\sigma} \circ' (t-x) = \alpha^{\sigma}t - x \alpha^{\sigma}
\]
while
\[
\phi(t) \circ' \phi(\alpha) = (t-x) \circ' \alpha 
= \alpha^{\sigma} t + x(\alpha - \alpha^{\sigma}) - x \alpha = \alpha^{\sigma}t - x \alpha^{\sigma}
\]
and the result holds.
\end{proof}

Note that defining the multiplication using remainder on \emph{left} division by $f$ also defines a semifield. However, the following theorems show that the semifields obtained are anti-isomorphic.

\begin{theorem}
\label{thm:antiisom} 
Let $R = K[t;\sigma]$ and $R' = K[t;\sigma^{-1}]$ be skew-polynomial rings. Denote the multiplication in $R$ and $R'$ by $\circ$ and $\circ'$ respectively. Then $R$ and $R'$ are anti-isomorphic via the map $\psi:R \rightarrow R'$ defined by
\[
\psi\left (\sum a_i t^i\right ) = \sum a_i^{\sigma^{-i}} t^i,
\]
i.e.
\[
\psi(a \circ b) = \psi(b) \circ' \psi(a)
\]
\end{theorem}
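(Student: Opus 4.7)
The plan is to verify the three required properties of $\psi$: that it is additive, bijective, and anti-multiplicative. Additivity is immediate from termwise definition, and bijectivity follows because each coefficient map $a \mapsto a^{\sigma^{-i}}$ is a permutation of $K$ (so $\psi$ is an $\FF_p$-linear bijection on the coefficient vector of each degree). The substantive content is the anti-multiplicativity identity $\psi(a \circ b) = \psi(b) \circ' \psi(a)$.

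First I would reduce this identity to the case of monomials. Both $\circ$ and $\circ'$ distribute over addition, and $\psi$ is additive, so the identity on sums of monomials follows from the identity on individual monomials $a = \alpha t^i$, $b = \beta t^j$. This reduction is the standard step and just needs to be noted rather than carried out in detail.

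Next I would compute both sides on such monomials. On one side, using $t\gamma = \gamma^\sigma t$ in $R$, one gets $\alpha t^i \circ \beta t^j = \alpha \beta^{\sigma^i} t^{i+j}$, and applying $\psi$ yields $\alpha^{\sigma^{-(i+j)}} \beta^{\sigma^{-j}} t^{i+j}$. On the other side, $\psi(b) \circ' \psi(a) = \beta^{\sigma^{-j}} t^j \circ' \alpha^{\sigma^{-i}} t^i$, and using the defining relation $t \gamma = \gamma^{\sigma^{-1}} t$ in $R'$ repeatedly (equivalently $t^j \gamma = \gamma^{\sigma^{-j}} t^j$) gives $\beta^{\sigma^{-j}} \alpha^{\sigma^{-(i+j)}} t^{i+j}$. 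The two expressions agree because $K$ is commutative.

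The only real bookkeeping obstacle is keeping the exponents $\sigma^{-i}$, $\sigma^{-j}$, $\sigma^{-(i+j)}$ straight and checking that the twisting on the right is correctly reversed when one swaps the order of the factors. This is precisely what motivates the choice $\sigma^{-i}$ (rather than $\sigma^i$) in the definition of $\psi$: the inverse automorphism in $R'$ combined with the reversed order of multiplication exactly cancels the forward twist in $R$. Once the monomial computation is written down, the theorem follows immediately, and one obtains Corollary \ref{thm:leftdivision} as a consequence by transporting the left/right division problem across the anti-isomorphism.
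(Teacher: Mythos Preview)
Your proof is correct and is essentially the paper's own argument: the paper computes $\psi(a\circ b)$ directly for general $a=\sum a_i t^i$, $b=\sum b_j t^j$ and arrives at the same termwise identity $a_i^{\sigma^{-i-j}} b_j^{\sigma^{-j}} = (b_j^{\sigma^{-j}})(a_i^{\sigma^{-i}})^{\sigma^{-j}}$ that you obtain on monomials. Your explicit reduction to monomials via additivity and distributivity is just a presentational difference, not a different approach.
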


\begin{proof} For any $a,b$,
\begin{align*}
\psi(a \circ b) &= \psi\left (\sum_{i,j} a_i b_j^{\sigma^i} t^{i+j}\right )\\
								&= \sum_{i,j} a_i^{\sigma^{-i-j}} (b_j^{\sigma^i})^{\sigma^{-i-j}} t^{i+j}\\
								&= \sum_{i,j} (b_j^{\sigma^{-j}}) (a_i^{\sigma^{-i}})^{\sigma^{-j}}  t^{i+j}\\
								&= \sum_{i,j} \psi(b)_j (\psi(a)_i)^{\sigma^{-j}}  t^{i+j} \\
								&= \psi(b) \circ' \psi(a),
\end{align*}
as claimed.
\end{proof}

\begin{corollary}\label{thm:leftdivision} Let $R$, $R'$ and $\psi$ be as above. Let $f$ be irreducible in $R$. Then
\begin{enumerate}
	\item
		$\psi(f)$ is irreducible in $R'$;
	\item
		If $\SSS_f = R \mod Rf$, and $_{\psi(f)}\SSS' = R' \mod \psi(f)R'$, then $\SSS_f$ and $_{\psi(f)}\SSS'$ are anti-isomorphic.
\end{enumerate}
\end{corollary}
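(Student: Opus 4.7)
My plan is to use the anti-isomorphism $\psi$ from Theorem \ref{thm:antiisom} itself (restricted to polynomials of degree $<d$) as the witness for both statements. Two easy structural facts will be used throughout: $\psi$ is degree-preserving (since it acts only on coefficients while leaving each monomial $t^i$ in place), and its inverse $\psi^{-1}:R'\to R$, given by $\sum b_i t^i \mapsto \sum b_i^{\sigma^i} t^i$, is itself an anti-isomorphism by the same computation as in Theorem \ref{thm:antiisom}.

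\textbf{Part (1).} I argue by contradiction: if $\psi(f)=g_1 \circ' g_2$ in $R'$ with $0<\deg g_i<d$, applying $\psi^{-1}$ and using its anti-homomorphism property gives $f=\psi^{-1}(g_2)\,\psi^{-1}(g_1)$ in $R$, a factorization of $f$ whose factors have degree $<d$ (since $\psi^{-1}$ preserves degrees). This contradicts the irreducibility of $f$.

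\textbf{Part (2).} Let $V=\{a\in R:\deg a<d\}$ and $V'=\{a'\in R':\deg a'<d\}$ be the canonical representative systems for $\SSS_f$ and $_{\psi(f)}\SSS'$. Since $\psi$ preserves degrees, $\psi|_V$ is a bijection $V\to V'$. The essential observation is that the anti-homomorphism property sends the left ideal $Rf\subset R$ to the right ideal $\psi(f)R'\subset R'$: indeed $\psi(rf)=\psi(f)\psi(r)$ and $\psi$ is surjective, so $\psi(Rf)=\psi(f)R'$. Now for $a,b\in V$ write the right-Euclidean division $ab=qf+r$ in $R$ with $r=a\circ_f b$ and $\deg r<d$; applying $\psi$ and using additivity together with the anti-homomorphism identity yields
\[
\psi(b)\circ'\psi(a) \;=\; \psi(f)\circ'\psi(q)+\psi(r),
\]
with $\deg\psi(r)<d=\deg\psi(f)$. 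This is exactly the left-Euclidean division of $\psi(b)\circ'\psi(a)$ by $\psi(f)$, so its remainder, which by definition is the product of $\psi(b)$ and $\psi(a)$ (in that order) in $_{\psi(f)}\SSS'$, equals $\psi(r)=\psi(a\circ_f b)$. This is precisely the required anti-isomorphism property for $\psi|_V$.

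\textbf{Main difficulty.} There is no real computational obstacle; the only care needed is in matching ideals to division sides. Because $\psi$ is orientation-reversing, it converts the left ideal $Rf$ (whose cosets are parameterized by right-division remainders) into the right ideal $\psi(f)R'$ (whose cosets are parameterized by left-division remainders), which is exactly why an anti-isomorphism of skew polynomial rings descends to an anti-isomorphism of the two semifield quotients with the order of multiplication swapped.
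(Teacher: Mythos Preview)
Your proposal is correct and follows essentially the same approach as the paper's own proof: both use $\psi$ itself as the anti-isomorphism, applying it to the right-division equation $ab=qf+r$ to obtain the left-division equation $\psi(b)\circ'\psi(a)=\psi(f)\circ'\psi(q)+\psi(r)$ in $R'$, and both handle irreducibility by pulling a hypothetical factorization of $\psi(f)$ back through the anti-homomorphism. Your additional remarks about $\psi(Rf)=\psi(f)R'$ and the ``matching of ideals to division sides'' are helpful exposition but not logically needed beyond what the paper already uses.
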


\begin{proof} (1) Clear, for if $\psi(f) = \psi(a) \circ' \psi(b)$, then by the previous theorem, $f = b \circ a$. But then $a$ or $b$ must be a unit, and as $\psi$ preserves degrees, $\psi(a)$ or $\psi(b)$ must be a unit.

(2) We claim that $\psi$ is an anti-isomorphism from $\SSS_f$ to $_{\psi(f)}\SSS'$. Clearly $\psi$ is a bijective linear map. We need to show that
$$\psi (a\circ_f b) = \psi(b) _{\psi(f)}\circ' \psi(a),$$ 
where
$$a\circ_f b = a\circ b \mod_r ~f, ~\mbox{and}$$
$$\psi(b) _{\psi(f)}\circ' \psi(a) = \psi(b)\circ' \psi(a) \mod_l ~\psi(f).$$
Let $a \circ b = u \circ f + v$, where $\deg(v)<d = \deg(f)$. Then using the above theorem we obtain
\begin{align*}
\psi (a\circ_f b)	&= \psi(v) \\
									&= \psi(a \circ b - u \circ f) = \psi(a \circ b) - \psi(u \circ f)\\
									&= \psi(b) \circ' \psi(a) - \psi(f) \circ' \psi(u) = \psi(b) \circ' \psi(a) \mod_l ~\psi(f)\\
									&=\psi(b) _{\psi(f)}\circ' \psi(a),
\end{align*}
as claimed.
\end{proof}

\begin{remark}
It is not clear when different skew polynomial rings $R = K[t;\sigma]$ and $R' = K[t;\sigma']$ define isotopic semifields. It is a necessary condition that $\sigma$ and $\sigma'$ have the same order. The following observation of Kantor and Liebler (in \cite{KALI2008}) gives a result in this direction.

{\it If $T$ is an irreducible semilinear transformation with automorphism $\sigma$, then $T^{-1}$ is an irreducible semilinear transformation with automorphism $\sigma^{-1}$, and $\SSS_T$ and $\SSS_{T^{-1}}$ are isotopic.} (See\cite{KALI2008}, Remark 4.1, where the statement is made in terms of projective planes.)

This implies that every semifield $\SSS_f$ for $f \in K[t;\sigma]$ is isotopic to $\SSS_{\bar{f}}$ for some $\bar{f} \in K[t;\sigma^{-1}]$. In fact it can be shown that $\bar{f}$ is the reciprocal of $f$.
Hence the total number of isotopy classes defined by degree $d$ irreducibles in $\Fqn[t;\sigma]$ for all $\sigma$ fixing precisely $\Fq$ is upper bounded by $\frac{\phi(n)}{2} M(q,d)$, when $n\neq 2$, 
where $\phi$ is Euler's totient function.
\end{remark}

\section{New and existing bounds for $A(q,n,d)$}

Let $N(q,d) = \# I(q,d)$, where $I(q,d)$ is the set of monic irreducibles of degree $d$ in $\Fq[y]$. This number is well known and equal to $\frac{1}{d}\sum_{s|d} \mu(s)q^{d/s}$, where $\mu$ denotes the Moebius function. Following the notation of \cite{JOMAPOTR2009}, we can write this as $N(q,d) = \frac{q^d - \theta}{d}$, where $\theta$ denotes the number of elements of $\Fqd$ contained in a proper subfield $\FF_{q^e}$.

Let $A(q,n,d)$ denote the number of isotopy classes of semifields of order $q^{nd}$ defined by the skew polynomial ring $\Fqn[t;\sigma]$, (or equivalently, semilinear transformations with automorphism $\sigma$), with
\[
(\#Z,\#N_l,\#N_m,\#N_r)  = (q,q^n,q^n,q^d).
\]

In \cite{JOMAPOTR2009}, the authors consider cyclic semifields two-dimensional over their left nucleus, with right and middle nuclei isomorphic to $\FF_{q^2}$t. The above defines the opposite semifield to those in this paper. Hence they are considering semifields $\SSS_f$, where $f \in \FF_{q^2}[t;\sigma]$ is an irreducible of degree $d$ (denoted by $n$ in their paper). They prove the lower bound
\[
A(q,2,d) \geq \frac{q^d - \theta}{2dhq(q-1)}
\]
where $q = p^h$.

In \cite{KALI2008}, the authors obtain an upper bound 
\[
A(q,n,d) \leq q^d - 1.
\]
They also obtained an upper bound for the total number of isotopy classes of semifields of order $q^{nd}$ obtained from semilinear transformations of order $q^{nd}$:
\[
nd q^{nd/2} log_2(q).
\]
The bounds for $A(q,n,d)$ that are proved in this paper arise from the following isotopism criteria.
In Theorem \ref{thm:similar}, we proved that if $f$ and $g$ are irreducibles of degree $d$ in $\Fqn[t;\sigma]$, with $gu\equiv 0 \mod f$ for some $u\in \Fqn[t;\sigma]$, then $\SSS_f$ and $\SSS_g$ are isotopic, and
\[
(a \circ_g b)^{H}= a \circ_f  b^{H}
\]
where $b^H = b\circ_f u$.
Next we have shown that this condition is equivalent to $mzlm(f)=mzlm(g)$ (Theorem \ref{thm:equivalence1}). This leads to the following upper bound
(this also follows from the result of Dempwolff \cite{Dempwolffprep}, by Theorem \ref{thm:correspondence3} above):
\[
A(q,n,d) \leq N(q,d) = \frac{q^d - \theta}{d}.
\]
We improve this bound by showing that $\SSS_f$ and $\SSS_g$ are also isotopic when 
$$ \lambda^{-d} \hat{f}^{\rho}(\lambda y)=\hat{g},$$ 
for some $\lambda \in \Fq^\times$, $\rho \in Aut(\Fq)$, where $\hat{f}=mzlm(f)$ and $\hat{g}=mzlm(g)$ (Theorem \ref{thm:bound}). This leads to the upper bound:
\[
A(q,n,d) \leq M(q,d),
\]
where $M(q,d)$ denotes the number of orbits in $I(q,d)$ under the action of $G$ defined in the introduction.

Note that if $q = p^h$ for $p$ prime, then
\[
\frac{q^d - \theta}{dh(q-1)} \leq M(q,d) \leq \frac{q^d - \theta}{d}.
\]

\textbf{Example:} For $q = \{2,3,4,5\}$, $n = d = 2$, the upper bounds $M(q,d) = \{1,2,1,3\}$ are tight by computer calculation.

\textbf{Example:} If $q$ is prime, and $(q-1,d) = 1$, then $M(q,d) = \frac{N(q,d)}{q-1}$.

\begin{remark}
To produce a specific example of every isotopy class of cyclic semifields, it suffices to find representatives $\hat{f_i}$ of each $G$-orbit of $I(q,d)$. We form the skew-polynomials $\hat{f_i}(t^n)$, and calculate a particular irreducible divisor $f_i$ of each, using for example the algorithm of Giesbrecht \cite{Giesbrecht1998}. Then the semifields $\SSS_{f_i}$ are representatives of each isotopy class.
\end{remark}

{\bf Acknowledgement}\\
The authors thank W. M. Kantor and the anonymous referee for their many helpful suggestions in improving this paper, and for bringing the papers of Petit and Wene to their attention.

\bibliographystyle{plain}

\end{document}